\begin{document}
\numberwithin{equation}{section}

\def\1#1{\overline{#1}}
\def\2#1{\widetilde{#1}}
\def\3#1{\widehat{#1}}
\def\4#1{\mathbb{#1}}
\def\5#1{\frak{#1}}
\def\6#1{{\mathcal{#1}}}

\newcommand{\w}{\omega}
\newcommand{\Lie}[1]{\ensuremath{\mathfrak{#1}}}
\newcommand{\LieL}{\Lie{l}}
\newcommand{\LieH}{\Lie{h}}
\newcommand{\LieG}{\Lie{g}}
\newcommand{\de}{\partial}
\newcommand{\R}{\mathbb R}
\newcommand{\Q}{\mathbb Q}
\newcommand{\FH}{{\sf Fix}(H_p)}
\newcommand{\al}{\alpha}
\newcommand{\tr}{\widetilde{\rho}}
\newcommand{\tz}{\widetilde{\zeta}}
\newcommand{\tk}{\widetilde{C}}
\newcommand{\tv}{\widetilde{\varphi}}
\newcommand{\hv}{\hat{\varphi}}
\newcommand{\tu}{\tilde{u}}
\newcommand{\tF}{\tilde{F}}
\newcommand{\debar}{\overline{\de}}
\newcommand{\Z}{\mathbb Z}
\newcommand{\C}{\mathbb C}
\newcommand{\Po}{\mathbb P}
\newcommand{\zbar}{\overline{z}}
\newcommand{\G}{\mathcal{G}}
\newcommand{\So}{\mathcal{S}}
\newcommand{\Ko}{\mathcal{K}}
\newcommand{\U}{\mathcal{U}}
\newcommand{\B}{\mathbb B}
\newcommand{\NC}{\mathcal N\mathcal C}
\newcommand{\oB}{\overline{\mathbb B}}
\newcommand{\Cur}{\mathcal D}
\newcommand{\Dis}{\mathcal Dis}
\newcommand{\Levi}{\mathcal L}
\newcommand{\SP}{\mathcal SP}
\newcommand{\Sp}{\mathcal Q}
\newcommand{\A}{\mathcal O^{k+\alpha}(\overline{\mathbb D},\C^n)}
\newcommand{\CA}{\mathcal C^{k+\alpha}(\de{\mathbb D},\C^n)}
\newcommand{\Ma}{\mathcal M}
\newcommand{\Ac}{\mathcal O^{k+\alpha}(\overline{\mathbb D},\C^{n}\times\C^{n-1})}
\newcommand{\Acc}{\mathcal O^{k-1+\alpha}(\overline{\mathbb D},\C)}
\newcommand{\Acr}{\mathcal O^{k+\alpha}(\overline{\mathbb D},\R^{n})}
\newcommand{\Co}{\mathcal C}
\newcommand{\Hol}{{\sf Hol}(\mathbb H, \mathbb C)}
\newcommand{\Aut}{{\sf Aut}(\mathbb D)}
\newcommand{\D}{\mathbb D}
\newcommand{\oD}{\overline{\mathbb D}}
\newcommand{\oX}{\overline{X}}
\newcommand{\loc}{L^1_{\rm{loc}}}
\newcommand{\la}{\langle}
\newcommand{\ra}{\rangle}
\newcommand{\thh}{\tilde{h}}
\newcommand{\N}{\mathbb N}
\newcommand{\kd}{\kappa_D}
\newcommand{\Ha}{\mathbb H}
\newcommand{\ps}{{\sf Psh}}
\newcommand{\Hess}{{\sf Hess}}
\newcommand{\subh}{{\sf subh}}
\newcommand{\harm}{{\sf harm}}
\newcommand{\Klim}{K\textrm{-}\lim\limits}
\newcommand{\ph}{{\sf Ph}}
\newcommand{\tl}{\tilde{\lambda}}
\newcommand{\gdot}{\stackrel{\cdot}{g}}
\newcommand{\gddot}{\stackrel{\cdot\cdot}{g}}
\newcommand{\fdot}{\stackrel{\cdot}{f}}
\newcommand{\fddot}{\stackrel{\cdot\cdot}{f}}
\def\v{\varphi}
\def\Re{{\sf Re}\,}
\def\Im{{\sf Im}\,}
\def\rk{{\rm rank\,}}
\def\rg{{\sf rg}\,}
\def\Gen{{\sf Gen}(\D)}
\def\Pl{\mathcal P}
\def\br{{\sf BRFP}}

\newtheorem{theorem}{Theorem}[section]
\newtheorem{lemma}[theorem]{Lemma}
\newtheorem{proposition}[theorem]{Proposition}
\newtheorem{corollary}[theorem]{Corollary}

\theoremstyle{definition}
\newtheorem{definition}[theorem]{Definition}
\newtheorem{example}[theorem]{Example}

\theoremstyle{remark}
\newtheorem{remark}[theorem]{Remark}
\numberwithin{equation}{section}

\title[Common fixed points]{Common boundary regular fixed points for holomorphic semigroups in strongly convex domains}
\author[M. Abate]{Marco Abate}
\address{M. Abate: Dipartimento di Matematica, Universit\`a di Pisa, L.go Pontecorvo 5,
56127 Pisa, Italy} \email{abate@dm.unipi.it}
\author[F. Bracci]{Filippo Bracci$^\dag$}
\address{F. Bracci: Dipartimento Di Matematica, Universit\`{a} Di Roma \textquotedblleft Tor
Vergata\textquotedblright, Via Della Ricerca Scientifica 1,
00133, Roma, Italy. } \email{fbracci@mat.uniroma2.it}
\thanks{$^{\dag}$Supported by the ERC grant ``HEVO - Holomorphic Evolution Equations'' n. 277691.}

\dedicatory{Dedicated to Prof. David Shoikhet for his 60th anniversary}

\begin{abstract}
Let $D$ be a bounded strongly convex domain with smooth boundary in $\C^N$. Let $(\phi_t)$ be a continuous semigroup of holomorphic self-maps of $D$. We prove that if $p\in \de D$ is an isolated boundary regular fixed point for  $\phi_{t_0}$ for some $t_0>0$, then $p$ is a boundary regular fixed point for $\phi_t$ for all~$t\geq 0$. Along the way we also study backward iteration sequences for elliptic holomorphic self-maps of~$D$.
\end{abstract}

\maketitle

\section{Introduction}

Let $D\subset \C^N$ be a bounded domain.  A continuous one-parameter semigroup of holomorphic self-maps of $D$ (or, shortly a semigroup of holomorphic self-maps of $D$) is a family  $(\phi_t)_{t\geq 0}$ of holomorphic self-maps of $D$ such that $\phi_{t+s}=\phi_t\circ \phi_s$ for all $s,t\geq 0$, $\phi_0={\sf id}_D$ and  $\R^+\ni t\mapsto \phi_t(z)$ is locally absolutely continuous locally uniformly in $z$. Namely, $(\phi_t)$ is a continuous semigroup morphism between the semigroup  $(\R^+,+)$ endowed with the Euclidean topology and the composition semigroup of holomorphic self-maps of $D$ endowed with the topology of uniform convergence on compacta. Every semigroup of holomorphic self-maps of $D$ is generated by a $\R$-semi-complete holomorphic vector field on $D$, called the infinitesimal generator associated with the semigroup.

Semigroups of holomorphic self-maps have been extensively studied (see, {\sl e.g.} \cite{RS}), in connection  with various areas of analysis, including geometric function theory, operator theory, iteration theory, theory of branching stochastic processes, Loewner theory.

The aim of this paper is to give a contribution to boundary dynamics of semigroups on a bounded strongly convex domain with smooth boundary $D\subset\C^n$, studying common boundary (regular) fixed points of semigroups in $D$ (among which, a particular interesting case is the unit ball of $\C^N$).

As holomorphic self-maps of a domain might not extend continuously to the boundary, if $p\in \de D$ and $f\colon D\to D$ is holomorphic, one might think of $p$ as a boundary fixed point of $f$ if $f$ admits limit $p$ along all sequences converging to $p$ in some  ``admissible'' subset of $D$ (see Section \ref{brfpsec} for precise definitions). In case $D=\D$ the unit disc of $\C$, such admissible subsets are exactly the Stolz angles, and thus $p\in \de \D$ is a boundary fixed point of $f$ if $f$ has non-tangential limit $p$ at $p$.

Boundary fixed points can be divided into two categories, the {\sl boundary regular fixed points}---BRFP's for short---and the {\sl irregular} (or super-repulsive) fixed points. In the unit disc, the first category is formed by those boundary fixed points for which the angular derivative of the map exists finitely. By the classical Julia-Wolff-Carath\'eodory theorem, the existence of the (finite) angular derivative at a boundary point for a holomorphic self-map of $\D$ corresponds to the finiteness of the so-called {\sl boundary dilatation coefficient}---which, roughly speaking,  measure the rate of approach of  $f(z)$ to~$p$ as $z\to p$. In higher dimension, a boundary regular fixed point for a holomorphic self-map $f$ of  $D$ is a point $p\in \de D$ for which the admissible limit of $f$ at $p$ is $p$ and the boundary dilation coefficient of $f$ at $p$ is finite.

{\sl Common} boundary regular fixed points for a semigroup ({\sl i.e.}, boundary points which are BRFP's for each element of the semigroup) have been studied and characterized in terms of the local behavior of the associated infinitesimal generator in \cite{CDP2, E-S2, E-R-S, ERS06, AS, BCD}.

Moreover, it is known (see, \cite[Theorem 1]{CDP}, \cite[Theorem 2]{CDP2}, \cite[pag. 255]{Siskakis-tesis}, \cite{ES}) that given a semigroup $(\phi_t)$ of holomorphic self-maps of $\D$, a point $p\in \de \D$ is a boundary (regular) fixed point of $\phi_{t_0}$ for some $t_0>0$ if and only if it is a boundary (regular) fixed point of $\phi_t$ for all $t\geq 0$. The proof of this fact relies on the existence of the so-called K\"{o}nigs function, a univalent map from $\D$ to $\C$ which (simultaneously) linearizes the semigroup $(\phi_t)$. Such a tool is not available in higher dimension in general, due essentially to the lack of a Riemann uniformization theorem.

The aim of this paper is to extend part of the previous results to higher dimension  using an argument based on complex geodesics and backward iteration sequences. In order to state our result, we say that if $f\colon D\to D$ is holomorphic and $p\in \de D$ is a BRFP with boundary dilatation coefficient $A>0$, then $p$ is {\sl isolated} if there exists a neighborhood $U$ of $p$ such that $U\cap \de D$ contains no BRFP's for $f$ with boundary dilatation coefficients $\leq A$ except $p$.

Then our main result (proved in Section \ref{prova}) can be stated as follows

\begin{theorem}\label{main}
Let $D\subset \C^N$ be a bounded strongly convex domain with smooth boundary. Let $(\phi_t)$ be a semigroup of holomorphic self-maps of $D$. Suppose $p\in \de D$    is an isolated boundary regular fixed point for $\phi_{t_0}$ for some $t_0>0$. Then $p$ is a boundary regular fixed point for $\phi_t$ for all $t\geq  0$.
\end{theorem}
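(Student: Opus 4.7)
The plan is to show that $\Sigma:=\{t\geq 0 : p\text{ is a BRFP of }\phi_t\}$ coincides with $[0,\infty)$. Write $A>0$ for the boundary dilatation coefficient of $\phi_{t_0}$ at $p$.

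\textbf{Reduction.} The set $\Sigma$ is an additive subsemigroup of $[0,\infty)$ containing $0$ and $t_0$: if $s,t\in\Sigma$, the identity $\phi_{s+t}=\phi_s\circ\phi_t$ together with the multiplicative behavior of boundary dilatations at a common BRFP---a consequence of Julia's lemma, which in strongly convex domains follows from Lempert's theory of complex geodesics---gives $s+t\in\Sigma$ with dilatation $A_sA_t$. Hence it suffices to prove that $t_0/n\in\Sigma$ for every $n\in\N$, since then $\Sigma$ contains the dense subsemigroup $\bigcup_n(t_0/n)\N$ of $[0,\infty)$, and a closedness argument based on the locally uniform continuity of $t\mapsto\phi_t$ and on the persistence of Julia horospheres under such limits forces $\Sigma=[0,\infty)$.

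\textbf{Interpolating a backward orbit.} Set $f:=\phi_{t_0}$ and $g:=\phi_{t_0/n}$. The core of the proof is to show that $p$ is a BRFP of $g$. The isolated BRFP hypothesis for $f$, combined with the backward iteration theory developed in the paper (covering in particular the elliptic case announced in the abstract) and with Lempert's complex geodesics, produces a backward orbit $(z_k)_{k\in\N}\subset D$ for $f$ converging admissibly to $p$---that is, $f(z_{k+1})=z_k$ and $z_k\to p$ inside a Koranyi region at $p$. The semigroup property then interpolates this orbit: for $k\geq 0$ and $0\leq j<n$, set
$$y_{k,j}:=\phi_{(n-j)t_0/n}(z_{k+1}),$$
so that $y_{k,0}=z_k$, $g(y_{k,j})=y_{k,j-1}$ for $1\leq j<n$, and $g(z_{k+1})=y_{k,n-1}$. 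Concatenating,
$$z_0,\ y_{0,1},\ \ldots,\ y_{0,n-1},\ z_1,\ y_{1,1},\ \ldots$$
is a backward orbit $(w_m)$ of $g$ that coincides with $(z_k)$ at every $n$-th step.

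\textbf{Conclusion and main obstacle.} Two more steps close the argument. The first is to show that the whole sequence $(w_m)$---not merely the subsampled $(z_k)$---converges to $p$: any boundary cluster point $q\neq p$ would, via the relations $\phi_{t_0-s}(\phi_s(z_{k_\ell+1}))=z_{k_\ell}\to p$ (with $s\in(0,t_0)$ selected from an appropriate subsequence), force dynamical behavior of $f$ near $p$ incompatible with the isolatedness hypothesis. The second, and I expect harder, step is to pass from the existence of an admissibly converging backward orbit of $g$ to the statement that $p$ is a BRFP of $g$ with finite dilatation. This is where the paper's backward-iteration machinery for strongly convex domains is essential: through Lempert projection devices along the complex geodesics attached to the orbit, the needed Julia-type estimate for $g$ at $p$ reduces to a one-dimensional statement on $\D$. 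Once this is in place, multiplicativity of the boundary dilatation identifies the dilatation of $g$ at $p$ as $A^{1/n}$, so $t_0/n\in\Sigma$ and the reduction of the first step concludes.
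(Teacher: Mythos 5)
Your plan has a genuine circularity at its core, in the interpolation step. Set-theoretically your concatenated sequence $(w_m)$ is indeed a backward orbit for $g=\phi_{t_0/n}$, but every result you would invoke to conclude that its limit is a BRFP of $g$ (in the paper, Lemma 3.2) requires \emph{bounded hyperbolic step}, $\sup_m k_D(w_m,w_{m+1})<+\infty$. The consecutive distances in your sequence are of the form $k_D\bigl(\phi_{s+t_0/n}(z_{k+1}),\phi_s(z_{k+1})\bigr)\le k_D\bigl(\phi_{t_0/n}(z_{k+1}),z_{k+1}\bigr)$, and nothing in your argument bounds this quantity: if $\phi_{t_0/n}(p)\ne p$ --- exactly what you have not yet excluded --- then $z_{k+1}\to p$ while $\phi_{t_0/n}(z_{k+1})\to\phi_{t_0/n}(p)\ne p$, so these distances tend to $+\infty$. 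Thus bounded step for the interpolated orbit essentially presupposes the conclusion. The same vagueness affects your claim that a boundary cluster point $q\ne p$ of $(w_m)$ would contradict isolatedness: isolatedness only excludes \emph{other BRFPs of $\phi_{t_0}$ with dilatation $\le\al_{t_0}(p)$ near $p$}, and you have not shown cluster points of $(w_m)$ are such points. Two further gaps: the case $\al_{t_0}(p)\le 1$ is not covered at all, since the backward-orbit existence results in strongly convex domains are only for isolated boundary \emph{repelling} fixed points (when $\al_{t_0}(p)=1$ and $p$ is isolated one must argue separately that $p$ is the Denjoy--Wolff point of $\phi_{t_0}$, hence of the whole semigroup); and the final passage from the dense set of times $\{mt_0/n\}$ to all $t\ge 0$ (``persistence of Julia horospheres under such limits'') is asserted, not proved. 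Incidentally, the step you single out as hardest --- from an admissibly converging bounded-step backward orbit to a BRFP --- is actually the easy, already available part; the real difficulties are precisely the ones you glossed over.

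The paper avoids all of this by never changing the map whose backward orbit is considered. It takes a bounded-step backward orbit $(w_n)$ of $f=\phi_{t_0}$ converging to $p$ in a $K$-region (existence only for $\al_{t_0}(p)>1$; the case $\al_{t_0}(p)\le 1$ is dispatched separately via the Denjoy--Wolff point), and pushes it forward: $z^t_n:=\phi_t(w_n)$ is again a backward orbit of $\phi_{t_0}$, with step still $\le\tfrac12\log\al_{t_0}(p)$ because $\phi_t$ contracts $k_D$ --- this is what makes the bounded-step issue disappear. A separate result on regular contact points for semigroups (Proposition 5.1, proved with complex geodesics and normal families) guarantees that $\phi_t(p)$ exists for all $t\in[0,t_0]$, that $z^t_n\to\phi_t(p)$, and that $t\mapsto\phi_t(p)$ is continuous; then $\phi_t(p)$ is a BRFP of $\phi_{t_0}$ with dilatation $\le\al_{t_0}(p)$, and continuity of this curve plus isolatedness force $\phi_t(p)=p$ on $[0,t_0]$, with the chain rule finishing for $t>t_0$. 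If you want to salvage your route through the fractional iterates $\phi_{t_0/n}$, you would still need an analogue of that contact-point continuity result, at which point the paper's direct argument is shorter.
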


The proof of Theorem \ref{main} relies on the study of ``regular contact points'' for semigroups and backward iteration sequences. In particular, in Section \ref{Scontact},  using complex geodesics, we prove that given a regular contact point $p\in \de D$ for $\phi_{t_0}$ for some $t_0>0$, then $p$ is a regular contact point for $\phi_t$ for all $t\in [0,t_0]$ and the curve $[0,t_0]\ni t\mapsto \phi_t(p)\in \de D$ is continuous, extending to higher dimension one of the results in \cite{BP}. Along the way, in Section \ref{Sback} we extend the results about existence and convergence of backward iteration sequences for holomorphic self-maps of $D$ obtained in \cite{AR1} to the case of ``rotational elliptic maps''.

\section{BRFP's in strongly convex domains}\label{brfpsec}

Let $D$ be a bounded strongly convex domain in $\C^n$ with smooth
boundary.  A {\sl complex geodesic} is a holomorphic map $\v\colon \D\to
D$ which is an isometry between the Poincar\'e metric $k_{\D}$ of
$\D=\{\zeta\in \C\mid |\zeta|<1\}$ and the Kobayashi distance $k_D$ in
$D$ (\cite{Kob}). A holomorphic map $h\colon  \D \to D$ is a complex geodesic if and only if it is an infinitesimal isometry between the Poincar\'e metric $\kappa_\D$ of $\D$ and the Kobayashi metric $\kappa_D$ of $D$ (see \cite[Ch. 2.6]{A}).

According to  Lempert (see \cite{Le, Le1, Le2} and \cite{A}), any complex
geodesic extends smoothly to the boundary of the disc and $\v(\de
\D)\subset \de D$. Moreover,  given any two points $z,w\in
\overline{D}$, $z\neq w$, there exists a complex geodesic $\v\colon \D\to
D$ such that $z,w\in \v(\oD)$. Such a geodesic is unique up to
pre-composition with automorphisms of $\D$. Conversely, if $\v\colon \D \to D$ is a holomorphic map such that $k_D(\v(\zeta_1), \v(\zeta_2))=k_{\D}(\zeta_1,\zeta_2)$ for some $\zeta_1\neq \zeta_2\in \D$, then $\v$ is a complex geodesic.

Similarly, given $z\in D$ and $v\in T_zD\setminus \{0\}$, there exists a unique  complex geodesic such that $\v(0)=z$ and $\v'(0)=\lambda v$ for some $\lambda>0$.

If $\v\colon \D\to D$ is a complex geodesic then there exists a (unique when suitably normalized)
holomorphic map $\tilde{\rho}\colon D\to \D$, smooth up to  $\de
D$ such that $\tilde{\rho} \circ \varphi = {\sf id}_{\D}$. The map $\tilde{\rho}$ is called the {\em left inverse } of $\varphi$. It is known that $\tilde{\rho}^{-1}(e^{i\theta})=\{\v(e^{i\theta})\}$ for all $\theta\in \R$, while the fibers $\tr^{-1}(\zeta)$ are the intersection of $D$ with affine complex hyperplanes  for all $\zeta\in \D$ (see, {\sl e.g.}, \cite[Section 3]{BPT}).

In the sequel we shall use the following result (see \cite[Corollary 2.3, Lemma 3.5]{BPT})

\begin{proposition}\label{geo-converge}
Let $D\subset \C^N$ be a bounded strongly convex domain with smooth boundary and let $z_0\in D$. Let $\{\v_k\}_{k\in \N}$ be a family of complex geodesics of $D$ such that $\v_k(0)=z_0$ for all $k\in \N$ and let $\tilde{\rho}_k$ denote the left inverse of $\v_k$ for $k\in \N$. If $\{\v_k\}$ converges uniformly on compacta of $\D$ to a function $\v\colon \D\to \C^N$, then $\v$ is a complex geodesic  and $\v_k\to \v$ uniformly in $\overline{\D}$. Moreover, $\{\tilde{\rho}_k\}$ converges uniformly in $\overline{D}$ to the left inverse $\tilde{\rho}$ of $\v$.
\end{proposition}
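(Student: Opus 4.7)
The plan is to argue in three stages: first identify the limit $\v$ as a complex geodesic, then upgrade the convergence of $\{\v_k\}$ from compacta of $\D$ to all of $\overline{\D}$, and finally treat the left inverses $\{\tilde{\rho}_k\}$ by an analogous normal-family argument.

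For the first stage I would exploit that each $\v_k$ is a Kobayashi isometry. Fix $\zeta_1\neq\zeta_2$ in $\D$; then $k_D(\v_k(\zeta_1),\v_k(\zeta_2))=k_\D(\zeta_1,\zeta_2)$, and in particular $k_D(\v_k(\zeta_j),z_0)=k_\D(\zeta_j,0)$. Since $D$ is bounded strongly convex it is complete hyperbolic, so $k_D(\,\cdot\,,z_0)$ is a proper exhaustion of $D$; combined with the pointwise convergence $\v_k(\zeta_j)\to\v(\zeta_j)\in\overline{D}$, this forces $\v(\zeta_j)\in D$, otherwise $k_D(\v_k(\zeta_j),z_0)$ would blow up along the sequence while the right-hand side stays finite. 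Consequently $\v$ is a holomorphic map $\D\to D$, and by continuity of $k_D$ on $D\times D$ we may pass to the limit to get $k_D(\v(\zeta_1),\v(\zeta_2))=k_\D(\zeta_1,\zeta_2)$. The characterization recalled just before the statement then gives that $\v$ is a complex geodesic.

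For uniform convergence on $\overline{\D}$ I would invoke Lempert's regularity theory: complex geodesics of $D$ extend smoothly to $\overline{\D}$, and the subfamily of those passing through the fixed point $z_0$ is relatively compact in $C^0(\overline{\D},\overline{D})$ (with uniform H\"older estimates up to $\de\D$). Any subsequence of $\{\v_k\}$ therefore admits a further subsequence converging uniformly on $\overline{\D}$ to some map $\hat\v$; since $\hat\v=\v$ on $\D$ and both sides are continuous on $\overline{\D}$, in fact $\hat\v=\v$ on $\overline{\D}$. Every subsequence having the same limit, the full sequence $\v_k$ converges uniformly to $\v$ on $\overline{\D}$. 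A parallel argument then handles $\{\tilde{\rho}_k\}$: the maps $\tilde{\rho}_k\colon D\to\D$ form a normal family, and if $\sigma$ is a subsequential limit on compacta of $D$, passing to the limit in the identity $\tilde{\rho}_k\circ\v_k=\mathrm{id}_\D$ yields $\sigma\circ\v=\mathrm{id}_\D$, so $\sigma$ is non-constant with image in $\D$ and is a holomorphic left inverse of $\v$; by the uniqueness clause recalled in the text, $\sigma=\tilde{\rho}$. Hence the whole sequence $\tilde{\rho}_k$ converges on compacta of $D$, and Lempert's smoothness of the left inverses up to $\overline{D}$, together with the uniform estimates coming from the compactness of the geodesic family through $z_0$, upgrades this convergence to be uniform on $\overline{D}$.

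The principal obstacle is precisely this boundary-regularity input. The interior convergence of both $\v_k$ and $\tilde{\rho}_k$ is essentially a Montel/normal-families matter; promoting it all the way to $\overline{\D}$ and $\overline{D}$ requires the continuous dependence on parameters that is built into Lempert's theory of stationary discs---typically extracted from a Riemann--Hilbert problem whose solutions vary continuously with their boundary data in strong topologies. Once that uniform equicontinuity up to the boundary is in hand, the stated convergence follows from the interior one by the subsequence argument above.
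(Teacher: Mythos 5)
Your treatment of the geodesic part is fine and is essentially the standard argument (note that the paper gives no proof of this proposition at all: it quotes \cite[Corollary 2.3, Lemma 3.5]{BPT}): the interior isometry identity plus completeness of $k_D$ forces $\v(\D)\subset D$ and exhibits $\v$ as a complex geodesic, and the uniform Lempert boundary estimates for geodesics through the fixed point $z_0$ give precompactness in $C^0(\oD)$, whence uniform convergence on $\oD$ by the subsequence trick. The genuine gap is in your last step, where you identify a subsequential limit $\sigma$ of $\{\tilde{\rho}_k\}$ with $\tilde{\rho}$ ``by the uniqueness clause recalled in the text''. The text asserts uniqueness of the left inverse only \emph{when suitably normalized}: the bare property $\sigma\circ\v={\sf id}_\D$, $\sigma\colon D\to\D$ holomorphic, does \emph{not} characterize $\tilde{\rho}$. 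For instance, for the geodesic $\v(\zeta)=(\zeta,0)$ of $\B^2$, both $z\mapsto z_1$ and $z\mapsto z_1+\tfrac{1}{2}z_2^2$ are holomorphic left inverses mapping $\B^2$ into $\D$ (use $|z_1|<\sqrt{1-|z_2|^2}\le 1-\tfrac{1}{2}|z_2|^2$), and the second one even extends continuously to $\overline{\B^2}$ with fiber over $1$ equal to $\{(1,0)\}$, so soft boundary conditions do not rescue the uniqueness either. Hence your argument cannot single out $\tilde{\rho}$ among the subsequential limits; what is needed is the extra structure that pins down the canonical Lempert left inverse---its fibers over points of $\D$ are intersections of $D$ with affine complex hyperplanes, coming from the dual map of the geodesic---together with the fact that this structure passes to the limit (convergence of the dual maps, equivalently of the affine fibers). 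That is precisely the content of the cited \cite[Lemma 3.5]{BPT}, and it is the missing idea here.

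A second, smaller, issue: uniform convergence of $\{\tilde{\rho}_k\}$ on $\overline{D}$ requires equicontinuity of this family up to $\de D$, which does not follow from smoothness of each $\tilde{\rho}_k$ up to the boundary plus $C^0$-compactness of the geodesics; it comes from uniform boundary estimates on the geodesics and their duals in stronger norms (again part of the Lempert theory packaged in \cite{BPT}). So, as written, the whole burden of the statement about the left inverses rests on inputs your proof does not actually supply.
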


Given $z_0\in D$ and $p\in \de D$, we will denote by $\v_p\colon \D \to D$ the unique complex geodesic such that $\v_p(0)=z_0$ and $\v_p(1)=p$ and by $\tilde{\rho}_p$ its left inverse. We will also denote
\[
\rho_p:=\v_p \circ \tilde{\rho}_p \colon  D \to \v(\D).
\]

We recall now the notion of ``admissible limits'' in strongly convex domain (see \cite{A, Ab1}).

\begin{definition}
Let $D\subset \C^N$ be a bounded strongly convex domain with smooth boundary and let $z_0\in D$ and $p\in \de D$.   A sequence $\{z_k\}\subset D$ converging to $p$ is said to be {\sl special} if
\[
\lim_{k\to \infty} k_D(z_k, \rho_p(z_k))=0.
\]
The sequence $\{z_k\}$ is called {\sl restricted} if $\tilde{\rho}_p(z_k)\to 1$ non-tangentially in $\D$. A  continuous curve $\gamma\colon  [0,1]\to D$ such that $\gamma(1)=p$ is called {\sl special}, respectively {\sl restricted}, provided for any sequence $\{t_k\}\subset [0,1)$ converging to $1$, the sequence $\{\gamma(t_k)\}$ is special, respectively restricted.
\end{definition}

\begin{definition}[\cite{A}, \cite{Ab1}]
Let $D\subset \C^N$ be a bounded strongly convex   domain with smooth boundary and let $z_0\in D$. Let $p \in \partial D$ and $M>1$. The {\sl $K$-region $K_{z_0}(p,M)$} of center
$\tau$, amplitude $M$ and pole $z_0$ is
\[ K_{z_0}(p,M):= \left\{ z \in D \mid \lim_{w \rightarrow p} \left[ k_D(z,w) -
k_D(z_0,w) \right] + k_D(z_0,z) < \log M \right\}. \]
\end{definition}

Let $\{z_k\}\subset D$ be a sequence converging to $p\in \de D$. It is known (\cite[Lemma 2.7.12]{A}, \cite{Ab1}) that if $\{z_k\}\subset K_{z_0}(p,M)$ for some $M>1$ then $\{z_k\}$ is restricted. While, if $\{z_k\}$ is special and restricted then it is eventually contained in a $K$-region $K_{z_0}(p,M)$ for some $M>1$.

A holomorphic self-map $f$ of $D$ has {\sl $K$-limit} $q$ at $p \in \partial D$---and we write $\Klim_{z\to p}f(z)=q$ or, for short, $q=f(p)$---if $\lim_{k\to \infty}f(w_k)=q$   for every sequence converging to $p$ and such that
$\left\{ w_k \right\}\subset K_{z_0}(p,M)$ for some $M>1$.

\begin{definition}
Let $D\subset \C^N$ be a bounded strongly convex   domain with smooth boundary and let $z_0\in D$. Let $f\colon D\to D$ be holomorphic and let $p\in \de D$. The {\sl boundary dilation coefficient $\al_p(f)\in (0,+\infty]$ of $f$ at $p$} is defined by
\[
\frac{1}{2}\log \al_p(f):=\liminf_{w\to p} [k_D(z_0,w)-k_D(z_0,f(w))].
\]
\end{definition}

Since
\begin{equation}\label{magzero}
\begin{split}
k_D(z_0,w)-k_D(z_0,f(w))&\geq k_D(f(z_0),f(w))-k_D(z_0,f(w))\\&\geq - k_D(f(z_0), z_0) >-\infty,
\end{split}
\end{equation}
the boundary dilation coefficient is always strictly positive. Moreover, the boundary dilation coefficient does not depend on $z_0$ and can be computed  using pluripotential theory   as in the classical Julia's lemma for the unit disc (see \cite{BCD}).

We state here the part of the Julia-Wolff-Carath\'eodory type theorem for strongly convex domains we need for our aims (see \cite[Thm. 2.7.14]{A}, \cite{Ab1} for the first part, and \cite[Prop. 3.4, Remark. 3.5]{B1} for (2)):

\begin{theorem}\label{JWC}
Let $D\subset \C^N$ be a bounded strongly convex   domain with smooth boundary and let $z_0\in D$ and $p\in \de D$.   Let $f\colon  D \to D$ be holomorphic and assume $\al_p(f)<+\infty$. Then  $f(p)=\Klim_{z\to p}f(z)$ exists and $f(p)\in \de D$. Moreover, \begin{enumerate}
  \item the holomorphic function
\[
D\ni z\mapsto \frac{1-\tilde{\rho}_{f(p)}(f(z))}{1-\tilde{\rho}_p(z)}
\]
has limit $\al_p(f)$ along any special and restricted sequence $\{z_k\}\subset D$ which converges to $p$. In particular, the holomorphic self-map of $\D$ given by $\zeta\mapsto \tr_{f(p)}(f(\v_p(\zeta)))$ has a boundary regular fixed point at $1$ and $\al_1\bigl(\tr_{f(p)}\circ f \circ \v_p\bigr)=\al_p(f)$.
  \item The curve $[0,1]\ni t\mapsto f(\v_p(t))$ converges to $f(p)$ and it is special and restricted.
\end{enumerate}
\end{theorem}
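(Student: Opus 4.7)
The strategy is to reduce the statement to the classical one-variable Julia--Wolff--Carath\'eodory theorem on $\D$ by conjugating $f$ with the complex geodesics $\v_p$ and $\v_{f(p)}$. The existence of $f(p):=\Klim_{z\to p}f(z)\in\de D$ is Julia's lemma for bounded strongly convex domains: the hypothesis $\al_p(f)<+\infty$ forces the horospheres at $p$ to be mapped by $f$ into horospheres based at a single point of $\overline{D}$, which must in fact lie on $\de D$ by the estimate \eqref{magzero}; this is \cite[Thm. 2.7.14]{A}.

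Next consider the slice function
\[
g:=\tr_{f(p)}\circ f\circ \v_p\colon \D\to\D.
\]
Because $\v_p$ is a Kobayashi isometry with $\v_p(0)=z_0$ and $\tr_{f(p)}$ is a holomorphic left inverse of $\v_{f(p)}$ (hence $1$-Lipschitz for the Kobayashi distances with $\tr_{f(p)}(z_0)=0$), for every $\zeta\in\D$,
\[
k_\D(0,\zeta)-k_\D(0,g(\zeta))\geq k_D(z_0,\v_p(\zeta))-k_D(z_0,f(\v_p(\zeta))).
\]
Taking $\liminf$ as $\zeta\to 1$ yields $\al_1(g)\geq\al_p(f)$. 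For the reverse inequality, pick any special and restricted sequence $\{z_k\}\subset D$ converging to $p$ and realising the $\liminf$ in the definition of $\al_p(f)$; then $\tr_p(z_k)\to 1$ non-tangentially in $\D$ (since $\{z_k\}$ is eventually in a $K$-region $K_{z_0}(p,M)$), while the specialness $k_D(z_k,\rho_p(z_k))\to 0$ together with the Kobayashi contraction of $\tr_{f(p)}\circ f$ allows one to replace $z_k$ by $\rho_p(z_k)=\v_p(\tr_p(z_k))$ without altering the relevant liminf. This gives $\al_1(g)\leq\al_p(f)$, hence $\al_1(g)=\al_p(f)<+\infty$, and the classical one-variable Julia--Wolff--Carath\'eodory theorem furnishes the BRFP statement for $g$ in part (1).

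For the full assertion of part (1) along an arbitrary special and restricted $\{z_k\}\to p$, set $a_k:=\tr_{f(p)}(f(z_k))$ and $b_k:=g(\tr_p(z_k))=\tr_{f(p)}(f(\rho_p(z_k)))$. Specialness gives $k_\D(a_k,b_k)\leq k_D(f(z_k),f(\rho_p(z_k)))\leq k_D(z_k,\rho_p(z_k))\to 0$, and a standard horocycle estimate on $\D$ converts this, together with $b_k\to 1$ non-tangentially, into $|a_k-b_k|=o(1-\tr_p(z_k))$; dividing the ratio in (1) by $1-\tr_p(z_k)$ and invoking $(1-g(\zeta))/(1-\zeta)\to\al_p(f)$ non-tangentially at $1$ yields the claimed limit. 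Finally, part (2) is the specialisation of the above to the radial curve $t\mapsto\v_p(t)$, which is itself special and restricted: the image $f(\v_p(t))$ is restricted at $f(p)$ because $\tr_{f(p)}(f(\v_p(t)))=g(t)\to 1$ non-tangentially, and is special because $f$ is Kobayashi non-expanding while the fibres of $\tr_{f(p)}$ collapse to a point at the boundary. The main obstacle is the horocycle estimate linking the specialness condition in $D$ to the sharper bound $|a_k-b_k|=o(1-\tr_p(z_k))$ on $\D$; this is where the smooth strong convexity of $D$, through the boundary regularity of complex geodesics and their left inverses, plays an essential role.
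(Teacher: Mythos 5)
First, note that the paper does not prove Theorem \ref{JWC} at all: it is quoted from \cite[Thm.\ 2.7.14]{A}, \cite{Ab1} (existence of the $K$-limit and part (1)) and \cite[Prop.\ 3.4, Remark 3.5]{B1} (part (2)). Your proposal is therefore an attempt to reprove these results by slicing with complex geodesics, and at the two decisive points it assumes exactly their nontrivial content. The first gap is the sentence ``pick any special and restricted sequence $\{z_k\}$ converging to $p$ and realising the $\liminf$ in the definition of $\al_p(f)$'': the $\liminf$ is taken over \emph{unrestricted} approach, and there is no a priori reason that a minimizing sequence can be chosen special and restricted. Proving that the unrestricted $\liminf$ is not strictly smaller than the limit along special restricted approaches is the heart of any Julia--Wolff--Carath\'eodory theorem (already in one variable), and in strongly convex domains it requires the boundary estimates on $k_D$ and the Lindel\"of principle of \cite{A, Ab1}; your slice comparison only yields the easy inequality $\al_1(g)\geq \al_p(f)$, which you correctly identify, but the reverse inequality is precisely what cannot be assumed.

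The second gap is that, even granting such a minimizing sequence, your deduction of $\al_1(g)\leq\al_p(f)$ does not go through as written. Replacing $z_k$ by $\rho_p(z_k)=\v_p(\zeta_k)$ controls the terms $k_D(z_0,\cdot)$ on the source side, but on the target side the contraction property gives only $k_\D\bigl(0,g(\zeta_k)\bigr)\leq k_D\bigl(z_0,f(\v_p(\zeta_k))\bigr)$, which is the wrong direction: to conclude you need $k_\D\bigl(0,\tr_{f(p)}(f(\v_p(\zeta_k)))\bigr)\geq k_D\bigl(z_0,f(\v_p(\zeta_k))\bigr)-o(1)$, i.e.\ that $f(\v_p(\zeta_k))$ is asymptotically \emph{special} for the point $f(p)$ --- which is exactly part (2) of the theorem. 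And your justification of part (2) (``the fibres of $\tr_{f(p)}$ collapse to a point at the boundary'') is not a proof: Euclidean collapse of the fibres gives no control on the Kobayashi quantity $k_D\bigl(f(\v_p(t)),\rho_{f(p)}(f(\v_p(t)))\bigr)$, which a priori could even blow up near $\de D$; this is where the geometry of smoothly bounded strongly convex domains genuinely enters, and it is the content of \cite[Prop.\ 3.4]{B1}. The parts of your argument that are sound --- the inequality $\al_1(g)\geq\al_p(f)$, the identification $g(1)=1$ once the $K$-limit is known, and the one-variable hyperbolic estimate $|a_k-b_k|=o(1-\tr_p(z_k))$ transferring the limit from $b_k$ to $a_k$ --- are the easy steps (indeed the ``main obstacle'' you single out is purely one-dimensional), so without the two missing ingredients the reduction to the classical theorem on $\D$ is essentially circular.
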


Boundary dilation coefficients satisfy the chain rule:

\begin{lemma}\label{chainrule}
Let  $D\subset \C^N$ be a bounded strongly convex   domain with smooth boundary. Let $f,g \colon  D \to D$ be holomorphic and let $p\in \de D$. If $\al_p(g\circ f)<+\infty$, then $\al_p(f)<+\infty$ and $\al_{f(p)}(g)<+\infty$, where $f(p)=\Klim_{z\to p}f(z)$. Moreover,
\begin{equation}\label{chain-eq}
\al_p(g\circ f)=\al_{f(p)}(g)\cdot \al_p(f).
\end{equation}
Conversely, if $\al_p(f)<+\infty$, then \eqref{chain-eq} holds.
\end{lemma}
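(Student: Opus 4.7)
The plan is to reduce the statement to the classical Julia--Wolff--Carath\'eodory theorem on $\D$ by composing with the complex geodesic $\v_p$ and factoring the one-variable Julia ratio at $1$ of $H:=\tr_{g(f(p))}\circ(g\circ f)\circ\v_p\colon\D\to\D$:
\[
\frac{1-\tr_{g(f(p))}(g(f(\v_p(\zeta))))}{1-\zeta}=\frac{1-\tr_{g(f(p))}(g(f(\v_p(\zeta))))}{1-\tr_{f(p)}(f(\v_p(\zeta)))}\cdot\frac{1-\tr_{f(p)}(f(\v_p(\zeta)))}{1-\zeta},
\]
which I would evaluate as $\zeta\to 1^-$. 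The second factor is the Julia ratio of $f$ along $\v_p(\zeta)\to p$; thanks to the speciality of $\{f(\v_p(\zeta))\}$ at $f(p)$ given by Theorem~\ref{JWC}(2), the first factor is, up to an $o(1)$ correction, the Julia ratio at $1$ of $\tr_{g(f(p))}\circ g\circ\v_{f(p)}$ evaluated at $\tr_{f(p)}(f(\v_p(\zeta)))\to 1$ non-tangentially.

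For the \emph{converse} direction I assume $\al_p(f)<+\infty$. When $\al_{f(p)}(g)<+\infty$ as well, Theorem~\ref{JWC}(1) applied to $f$ makes the second factor tend to $\al_p(f)$ and, via the identification above, Theorem~\ref{JWC}(1) applied to $g$ at $f(p)$ makes the first factor tend to $\al_{f(p)}(g)$. Hence the left-hand side---the radial Julia ratio at $1$ of $H$---has finite limit $\al_p(f)\al_{f(p)}(g)$. Julia's lemma on $\D$ then gives $\al_1(H)\leq\al_p(f)\al_{f(p)}(g)$, while the Kobayashi contractivity of the projection $\rho_{g(f(p))}=\v_{g(f(p))}\circ\tr_{g(f(p))}$ (which fixes $z_0$) yields $\al_p(g\circ f)\leq\al_1(H)$, so $\al_p(g\circ f)\leq\al_p(f)\al_{f(p)}(g)<+\infty$. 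In the sub-case $\al_{f(p)}(g)=+\infty$ I argue by contradiction: if $\al_p(g\circ f)<+\infty$ then the ``if'' direction below forces $\al_{f(p)}(g)<+\infty$, so $\al_p(g\circ f)=+\infty$ and \eqref{chain-eq} holds in the extended sense.

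For the ``if'' direction I assume $\al_p(g\circ f)<+\infty$. The Kobayashi contractivity $k_D(z_0,g(f(w)))\leq k_D(z_0,g(z_0))+k_D(z_0,f(w))$ inserted in the $\liminf$ definition forces $\al_p(f)<+\infty$, so $f(p)\in\de D$ exists by Theorem~\ref{JWC}. Applying Theorem~\ref{JWC}(1) to $f$ and to $g\circ f$ along $\v_p(\zeta)$ makes both the second factor and the left-hand side of the factorization converge (to $\al_p(f)$ and $\al_p(g\circ f)$ respectively), so the first factor converges to $\al_p(g\circ f)/\al_p(f)$. Julia's lemma on $\D$ applied to $\tr_{g(f(p))}\circ g\circ\v_{f(p)}$ then yields $\al_{f(p)}(g)\leq\al_p(g\circ f)/\al_p(f)<+\infty$ (and identifies the K-limits $(g\circ f)(p)=g(f(p))$), whence $\al_p(f)\al_{f(p)}(g)\leq\al_p(g\circ f)$; combined with the previous direction this proves the equality in \eqref{chain-eq} whenever all three coefficients are finite.

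The main obstacle is the faithful transfer between Julia ratios in $D$ (phrased via $\tr_q$ and hence seeing maps only through the geodesic projection $\rho_q$) and Julia ratios in $\D$ for the induced self-maps. The difference $F(w)-\rho_q(F(w))$ is controlled by the speciality of the curve $w\mapsto F(w)$ at $F(p)$ (Theorem~\ref{JWC}(2)) combined with Kobayashi contractivity of $\rho_q$; without this $o(1)$ error the factorization would yield only inequalities, and the two-way bound could not be closed.
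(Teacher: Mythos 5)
Your proof is correct and rests on the same core device as the paper's: compose with the geodesic $\v_p$, factor the Julia quotient along the radial curve exactly as in \eqref{divid}, let $t\to 1$, and use both parts of Theorem~\ref{JWC} (part (2) to know that $t\mapsto f(\v_p(t))$ tends to $f(p)$ specially and restrictedly, part (1) to evaluate the factors). Where you genuinely deviate is in how the finiteness statements are closed: the paper gets $\al_{f(p)}(g)<+\infty$ rather tersely from Theorem~\ref{JWC}, and in the converse direction concludes $\al_p(g\circ f)<+\infty$ together with \eqref{chain-eq} by citing the converse Julia--Wolff--Carath\'eodory statement \cite[Thm.~2.7]{BCD}; you instead make this self-contained by descending to the one-variable maps $h=\tr_{(g\circ f)(p)}\circ g\circ\v_{f(p)}$ and $H=\tr_{(g\circ f)(p)}\circ g\circ f\circ\v_p$ (note that before finiteness of $\al_{f(p)}(g)$ is known the relevant boundary point is $(g\circ f)(p)$, not yet identified with $g(f(p))$), applying the classical Julia lemma on $\D$, and transferring back with the Kobayashi contractivity of the left inverses, which vanish at $z_0$ since $\tr_q(z_0)=\tr_q(\v_q(0))=0$; this correctly gives $\al_{f(p)}(g)\le\al_1(h)$ and $\al_p(g\circ f)\le\al_1(H)$. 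The only delicate point is the one you name: moving the middle factor from the points $f(\v_p(t))$ onto the geodesic $\v_{f(p)}$. Your sketch is right, but you should state the estimate explicitly: with $s_t=\tr_{f(p)}(f(\v_p(t)))$, speciality gives $k_D\bigl(f(\v_p(t)),\v_{f(p)}(s_t)\bigr)\to0$, contractivity of $\tr_{(g\circ f)(p)}$ makes the corresponding hyperbolic distance in $\D$ vanish, and the elementary fact that $k_\D(a_k,b_k)\to0$ forces $(1-a_k)/(1-b_k)\to1$ (points at hyperbolic distance $\epsilon$ from $b$ lie within Euclidean distance $O(\epsilon)(1-|b|^2)\le O(\epsilon)\,|1-b|$) yields the needed $o(1)$ multiplicative correction. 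The payoff of your route is independence from \cite[Thm.~2.7]{BCD}; the paper's citation buys brevity.
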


\begin{proof} Let $z_0\in D$. Assume $\al_p(g\circ f)<+\infty$. Since
\begin{equation*}
\begin{split}
\al_p(g\circ f)&=\liminf_{w\to p} [k_D(z_0, w)- k_D(z_0, g(f(w)))]\\ &\geq \liminf_{w\to p} [k_D(z_0, w)- k_D(z_0, f(w))]\\&+\liminf_{w\to p} [k_D(z_0, f(w))- k_D(z_0, g(f(w)))]
\end{split}
\end{equation*}
and both terms in the right hand side are not $-\infty$ by \eqref{magzero}, it follows that $\al_p(f)<+\infty$. By Theorem \ref{JWC}, $f(p)$ exists and $\al_{f(p)}(g)<+\infty$. Moreover, for $t\in [0,1)$
\begin{equation}\label{divid}
\frac{1-\tilde{\rho}_{g(f(p))}(g(f(\v_p(t))))}{1-t}=
\frac{1-\tilde{\rho}_{g(f(p))}(g(f(\v_p(t))))}{1-\tilde{\rho}_{f(p)}(f(\v_p(t)))}\frac{1-\tilde{\rho}_{f(p)}(f(\v_p(t)))}{1-t}.
\end{equation}
Since $\tilde{\rho}_p(\v_p(t))=t$, equation \eqref{chain-eq} follows by Theorem \ref{JWC} letting $t\to 1$.

Conversely, assume $\al_p(f)<+\infty$. If $\al_{f(p)}(g)=+\infty$ then $\al_{p}(g\circ f)=+\infty$ for what we already proved. So we can assume $\al_{f(p)}(g)<+\infty$. By \eqref{divid} and Theorem \ref{JWC} the term on the left hand side has limit $\al_{f(p)}(g)\cdot \al_p(f)$ for $t\to 1$. By \cite[Thm. 2.7]{BCD} it follows that $\al_{p}(g\circ f)<+\infty$ and in fact \eqref{chain-eq} holds.
\end{proof}

\begin{definition}
Let $D\subset \C^N$ be a bounded strongly convex domain with smooth boundary. If $f\colon D \to D$ is holomorphic, we say that $p\in \de D$ is a {\sl contact point} for $f$ if $\Klim_{z\to p}f(z)=f(p)$ exists and $f(p)\in \de D$. The contact point $p$ is a {\sl regular contact point} if  $\al_f(p)<+\infty$.

In case $p=f(p)$, the point $p$ is called a {\sl boundary fixed point}. If it is also regular, it is called a {\sl boundary regular fixed point}---or BRFP for short. For $A>0$ we denote by
\[
\br_A(f):=\{p\in \de D \mid f(p)=p, \al_f(p)\leq A\}.
\]
A point $p\in \br_A(f)\setminus\br_1(f)$ is called a {\sl boundary repelling fixed point}.
\end{definition}

A point $p\in \br_A(f)$ is called {\sl isolated} if there exists a neighborhood $U$ of $p$ such that $\br_A(f)\cap U=\{p\}$.

In the unit disc, as a consequence of Cowen-Pommerenke's estimates \cite{CoPo} (see also \cite[Thm. 2.2]{B}), every boundary repelling fixed point is isolated. In higher dimension, this is no longer true (see \cite[Example 6.3]{O}).

The Denjoy-Wolff type theorem for strongly convex domains (see \cite[Thm. 2.4.23]{A}, \cite[Prop. 2.9]{BCD} and \cite{AR2}) can be stated as follows:

\begin{theorem}\label{Wolff}
Let $D\subset \C^N$ be a bounded strongly convex domain with smooth boundary and let $f\colon  D \to D$ be holomorphic. Then either
\begin{enumerate}
 \item there exists a complex geodesic $\v\colon \D\to D$ such that  $f(\v(\zeta))=\v(e^{i\theta}\zeta)$ for some $\theta\in \R$, and in particular $f(\v(0))=\v(0)$ and $\al_{f}(p)=1$ for all $p\in \de \v(\D)$, or
  \item there exists $x\in D$ such that $f(x)=x$ and for every $p\in \de D$ it holds $\al_f(p)>1$ --- and in such a case the sequence of iterates $\{f^{\circ m}\}$ converges uniformly on compacta to the constant map $z\mapsto x$, or
  \item $\br_1(f)$ contains a unique point $\tau\in \de D$ such that the sequence of iterates $\{f^{\circ m}\}$ converges uniformly on compacta to the constant map $z\mapsto \tau$ and $\al_f(p)>1$ for all $p\in \de D\setminus\{\tau\}$.
\end{enumerate}
\end{theorem}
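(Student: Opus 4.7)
The plan is to dichotomize on whether $f$ has a fixed point in $D$, and, in the fixed-point case, to analyze the spectrum of $df_x$ at such a fixed point $x$; the three resulting sub-cases match the three alternatives of the statement.

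If $f$ has no fixed point in $D$, I appeal to the Denjoy-Wolff theorem for strongly convex domains (Abate, Thm.~2.4.23 of \cite{A}; see also \cite{AR2}), which produces $\tau\in\de D$ with $f^{\circ m}\to\tau$ uniformly on compacta. The Julia-Wolff-type estimate underlying Theorem~\ref{JWC} forces $\tau\in\br_1(f)$; and if any other $p\in\de D$ had $\al_p(f)\le 1$, horosphere preservation from Julia's lemma, combined with the chain rule of Lemma~\ref{chainrule} applied to iterates, would prevent the sequence $\{f^{\circ m}\}$ from converging to $\tau\neq p$. This yields case~(3).

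Suppose instead $f(x)=x$ for some $x\in D$. The iterates $\{f^{\circ m}\}$ form a normal family, and since $f$ contracts $\kappa_D$ infinitesimally at $x$, the eigenvalues of $df_x$ all lie in $\oD$. If every eigenvalue lies in $\D$, then $(df_x)^m\to 0$, so iterating the local contraction near $x$ shows $f^{\circ m}\to x$ on a neighborhood of $x$; by analyticity any normal-family limit of $\{f^{\circ m}\}$ is identically $x$, hence $f^{\circ m}\to x$ uniformly on compacta. The same Julia-lemma argument as in case~(3) rules out $\al_p(f)\le 1$ for any $p\in\de D$, giving case~(2).

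In the remaining case $df_x$ has an eigenvalue $e^{i\theta}\in\de\D$ with nonzero eigenvector $v\in T_xD$. Let $\v\colon\D\to D$ be the complex geodesic with $\v(0)=x$ and $\v'(0)$ a positive multiple of $v$, and set $\psi:=f\circ\v$, so $\psi(0)=x$ and $\psi'(0)=df_x(\v'(0))=e^{i\theta}\v'(0)$. Since $\v$ is an infinitesimal Kobayashi isometry while $f$ contracts $\kappa_D$,
\[
\kappa_\D(0;1)=\kappa_D(x;\v'(0))\ge\kappa_D(x;\psi'(0))=|e^{i\theta}|\kappa_D(x;\v'(0))=\kappa_\D(0;1),
\]
so equality holds throughout; the rigidity characterization of complex geodesics recalled at the beginning of Section~\ref{brfpsec} then makes $\psi$ itself a complex geodesic, and uniqueness of complex geodesics in the prescribed direction forces $\psi(\zeta)=\v(e^{i\theta}\zeta)$. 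To get $\al_p(f)=1$ for every $p=\v(e^{i\phi})\in\de\v(\D)$, I evaluate the liminf defining $\al_p(f)$ along the radial sequence $w_t:=\v(e^{i\phi}t)$ with base point $z_0=x$: from $k_D(x,w_t)=k_\D(0,t)=k_D(x,f(w_t))$ one reads $\al_p(f)\le 1$, while the matching lower bound $\al_p(f)\ge 1$ follows from the general estimate \eqref{magzero} with $f(x)=x$. The main obstacle I anticipate is this rigidity step: one must know that infinitesimal Kobayashi isometry at a \emph{single} point forces the map to be a complex geodesic, a fact resting ultimately on Lempert's theorem; once that is granted, the whole picture collapses cleanly into the trichotomy.
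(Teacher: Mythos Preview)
The paper does not actually prove Theorem~\ref{Wolff}: it is quoted as a known result, with the proof deferred to \cite[Thm.~2.4.23]{A}, \cite[Prop.~2.9]{BCD} and \cite{AR2}. So there is no ``paper's own proof'' to compare your proposal against. That said, your outline is exactly the standard route taken in those references: dichotomize on the existence of an interior fixed point, invoke the Denjoy--Wolff theorem in the fixed-point-free case, and in the elliptic case split according to whether $df_x$ has a unimodular eigenvalue, using Lempert rigidity (infinitesimal isometry at one point $\Rightarrow$ complex geodesic) to produce the rotated geodesic in alternative~(1). Your computation of $\al_p(f)=1$ along $\de\v(\D)$ via the radial sequence and \eqref{magzero} with $z_0=x$ is correct.

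Two places deserve a word of caution. First, in case~(2) the claim ``$(df_x)^m\to 0$ hence $f^{\circ m}\to x$ locally, hence globally by analyticity'' is right in spirit but thin as written: the clean argument (as in \cite[Ch.~2.1, 2.4]{A}) uses tautness and the structure of the limit retraction rather than a bare local-contraction estimate. Second, your justification that $\al_f(p)>1$ for every $p\in\de D$ in case~(2) is not literally ``the same Julia-lemma argument as in case~(3)'': in~(3) you are ruling out a second boundary \emph{fixed} point, whereas in~(2) an arbitrary $p$ need not satisfy $f(p)=p$, so horosphere invariance at $p$ is unavailable. The correct argument is the one implicit in your case~(1) analysis run in reverse: \eqref{magzero} with $z_0=x$ already gives $\al_f(p)\ge 1$; if equality held, then $g:=\tr_{f(p)}\circ f\circ\v_p$ would be a self-map of $\D$ fixing $0$ with a BRFP at $1$ of dilation $1$, hence $g={\sf id}_\D$ by classical Julia, forcing $f\circ\v_p$ to be a complex geodesic and thus producing a unimodular eigenvector for $df_x$, contrary to hypothesis. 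With these two points tightened, your sketch matches the cited proofs.
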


A holomorphic self-map $f$ of $D$ is called {\sl rotational elliptic} if it satisfies (1) of Theorem \ref{Wolff}. It  is called {\sl strongly elliptic} if it satisfies (2) of Theorem \ref{Wolff}. Finally, $f$ is called {\sl non-elliptic} if it satisfies (3) of Theorem \ref{Wolff}.

In the non-elliptic case, the point $\tau$ is called the {\sl Denjoy-Wolff point} of $f$, and $f$ has no fixed points in $D$. In the strongly elliptic case, $f$ has a unique fixed point in $D$.

{\sl Boundary regular stationary points}, namely boundary regular fixed points with dilation coefficients $=1$ are very special. The following lemma follows at once from Theorem \ref{Wolff} and \cite[Prop. 2.9.(1)]{BCD}:

\begin{lemma}\label{stationary}
Let $D\subset \C^N$ be a bounded strongly convex domain with smooth boundary and let $f\colon  D \to D$ be holomorphic. Let $p\in \de D$ be such that $\al_f(p)=1$. If $p$ is isolated then $f$ is non-elliptic and $p$ is in fact the Denjoy-Wolff point of $f$.
\end{lemma}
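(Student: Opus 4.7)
My plan is to apply the trichotomy of Theorem \ref{Wolff} and rule out the two elliptic alternatives, after which the non-elliptic case forces $p = \tau$.

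Alternative (2) (strongly elliptic) fails at once: it would require $\al_f(q) > 1$ for every $q \in \de D$, contradicting $\al_f(p) = 1$.

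Alternative (1) (rotational elliptic) is the substantive case. Assume $f$ is rotational elliptic with interior fixed point $x$, and let $\psi\colon \D \to D$ be the complex geodesic through $x$ and $p$, normalized so that $\psi(0) = x$ and $\psi(1) = p$. The disc self-map
\[
g := \tr_\psi \circ f \circ \psi \colon \D \to \D
\]
satisfies $g(0) = 0$, and by Theorem \ref{JWC}.(1) (applied with pole $z_0 = x$, so that $\v_p = \psi$) it has a boundary regular fixed point at $\zeta = 1$ with $\al_1(g) = \al_f(p) = 1$. A one-dimensional Schwarz--Pick and Julia argument---the essence of \cite[Prop. 2.9.(1)]{BCD}---forces $g = \mathrm{id}_\D$ (either $g$ is a rotation, pinned to the identity by $g(1) = 1$, or $|g'(0)| < 1$ and then every boundary fixed point has dilation strictly greater than $1$, contradicting $\al_1(g) = 1$). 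For each $\phi \in \R$ set $q := \psi(e^{i\phi})$. Then $\zeta \mapsto \psi(e^{i\phi}\zeta)$ is the complex geodesic through $x$ and $q$, with left inverse $w \mapsto e^{-i\phi}\tr_\psi(w)$; substituting into $g = \mathrm{id}$ and using the fiber degeneracy $\tr_\psi^{-1}(e^{i\phi}) = \{\psi(e^{i\phi})\}$ at the boundary, one deduces that $f$ admits $q$ as $K$-limit at $q$ and, by Theorem \ref{JWC}.(1) applied once more, that $\al_f(q) = 1$. Hence $\de\psi(\D) \subset \br_1(f)$, contradicting the isolation of $p$.

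Only alternative (3) remains: $f$ is non-elliptic and, by Theorem \ref{Wolff}.(3), the Denjoy--Wolff point $\tau$ is the unique point of $\de D$ with boundary dilation $\leq 1$, so $\al_f(p) = 1$ forces $p = \tau$. The main obstacle is the propagation step in case (1): one must leverage $g = \mathrm{id}_\D$, together with the boundary behavior of $\tr_\psi$ and the identification of rotated geodesics as complex geodesics through $x$, to exhibit an entire circle $\de\psi(\D)$ of BRFPs of $f$ with unit dilation violating the isolation hypothesis on $p$.
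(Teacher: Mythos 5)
Your proposal is correct in substance, and it takes a genuinely more self-contained route than the paper: the paper dispatches this lemma in one line by citing Theorem \ref{Wolff} together with \cite[Prop. 2.9.(1)]{BCD}, whereas you replace the external citation by an explicit Julia's-lemma argument on the complex geodesic through the interior fixed point $x$ and $p$. That device (the disc map $g=\tr_p\circ f\circ\v_p$ with $g(0)=0$, $g(1)=1$, $\al_1(g)=\al_f(p)=1$, forced to be $\mathrm{id}_\D$ by Julia's lemma) is exactly the technique the paper itself uses in the proof of Proposition \ref{back-elliptic}.(1), so all the tools you invoke are available; the exclusions of case (2) and the identification $p=\tau$ in case (3) are immediate and correctly argued.

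The one step that needs repair is the order of deductions in the propagation. From $g=\mathrm{id}_\D$ and the fiber degeneracy $\tr_p^{-1}(e^{i\phi})=\{\v_p(e^{i\phi})\}$ you only control $f$ along the geodesic disc $\v_p(\D)$; this does not by itself yield the $K$-limit of $f$ at $q:=\v_p(e^{i\phi})$, and Theorem \ref{JWC}.(1) cannot be "applied once more" until you know $\al_f(q)<+\infty$, which is its hypothesis. The fix is one line and should come first: since $\tr_p(f(\v_p(\zeta)))=\zeta$ and $\tr_p$ is distance-decreasing, $k_D(x,f(\v_p(re^{i\phi})))\geq k_{\D}(0,re^{i\phi})=k_D(x,\v_p(re^{i\phi}))$, hence $\liminf_{z\to q}[k_D(x,z)-k_D(x,f(z))]\leq 0$, i.e. $\al_f(q)\leq 1<+\infty$. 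Now Theorem \ref{JWC} gives the $K$-limit $f(q)\in\de D$ and, by part (2), $f(\v_p(re^{i\phi}))\to f(q)$ as $r\to1$; since $\tr_p(f(\v_p(re^{i\phi})))=re^{i\phi}\to e^{i\phi}$ and $\tr_p$ is continuous on $\overline{D}$, the fiber degeneracy forces $f(q)=q$. Thus $\de \v_p(\D)\subset\br_1(f)$, and since $\v_p$ is injective on $\overline{\D}$ these are boundary regular fixed points distinct from, but arbitrarily close to, $p$ --- the contradiction with isolation you were aiming for. With this reordering your argument is complete.
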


\section{Backward iteration sequences}\label{Sback}

In order to prove our main result, we shall use the so-called backward iteration sequences.

\begin{definition}
Let $D\subset \C^N$ be a bounded strongly convex domain with smooth boundary and let $f\colon  D \to D$ be holomorphic. A {\sl backward iteration sequence} for $f$ at $p\in \de D$ is a sequence $\{w_k\}\subset D$ such that $f(w_{k+1})=w_k$ for all $k\in \N$ and
\[
\frac{1}{2}\log s(\{w_k\}):=\sup_{k\in \N} k_D(w_k, w_{k+1})<+\infty.
\]
The number $s(\{w_k\})$ is called the {\sl hyperbolic step} of the sequence $\{w_k\}$.
\end{definition}

Backward iteration sequences in the unit disc have been introduced in \cite{B}, exploiting results from \cite{PC}, in order to study BRFP's for commuting mappings, and they have been throughly studied in \cite{PC2}. Such results have been generalized to the ball in \cite{O} and to strongly convex domains for non rotational elliptic maps in \cite{AR1}. The following lemma is the content of \cite[Lemma 2.2, Lemma 2.3]{AR1}

\begin{lemma}\label{back-boundary}
Let $D\subset \C^N$ be a bounded strongly convex domain with smooth boundary, and let $f\colon  D \to D$ be a holomorphic map. Let $\{w_n\}$ be a backward iteration sequence converging toward the boundary of $D$. Then there exists a BRFP $p\in \de D$ for $f$ such that $w_n\to p$ and $\al_f(p)\leq s(\{w_k\})$.
\end{lemma}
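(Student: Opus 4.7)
The plan is to establish the three conclusions of the lemma in sequence: (i) the backward orbit $\{w_n\}$ has a unique accumulation point $p\in\de D$, (ii) $p$ is a BRFP of $f$, and (iii) $\al_f(p)\le s(\{w_k\})$. Write $c:=\tfrac12\log s(\{w_k\})$, so that $k_D(w_n,w_{n+1})\le c$ for all $n\in\N$. Since $f$ is $k_D$-nonexpansive and $f(w_{n+1})=w_n$, $f(w_n)=w_{n-1}$, one has $k_D(w_{n-1},w_n)=k_D(f(w_n),f(w_{n+1}))\le k_D(w_n,w_{n+1})$, so the consecutive distances are non-decreasing in $n$ and bounded above by $c$. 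The hypothesis $w_n\to\de D$ forces $k_D(z_0,w_n)\to+\infty$, so every subsequential limit lies in $\de D$.

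The dilation bound (iii) is a one-line consequence of (i). By the reverse triangle inequality,
\[
k_D(z_0,w_n)-k_D(z_0,f(w_n))=k_D(z_0,w_n)-k_D(z_0,w_{n-1})\le k_D(w_n,w_{n-1})\le c.
\]
Once $w_n\to p$ is established, taking $\liminf$ along $w=w_n$ in the definition of the boundary dilation coefficient gives $\tfrac12\log\al_f(p)\le c$, i.e., $\al_f(p)\le s(\{w_k\})$.

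The main work, and the main obstacle, is (i) together with verifying that the approach to $p$ occurs inside a $K$-region $K_{z_0}(p,M)$. Let $p$ be any subsequential limit of $\{w_n\}$ and consider the complex geodesic $\v_p$ from $z_0$ to $p$ with left inverse $\tr_p$; the push-forward $\zeta_n:=\tr_p(w_n)\in\D$ inherits $k_\D(\zeta_n,\zeta_{n+1})\le k_D(w_n,w_{n+1})\le c$ from non-expansiveness, and has a subsequence tending to $1\in\de\D$. The idea is to combine this bounded-step condition with the ``almost-backward-iteration'' relation satisfied by $\{\zeta_n\}$ for $\tr_p\circ f\circ\v_p$ and invoke the one-dimensional theory in $\D$ (Poggi-Corradini) to force $\zeta_n\to 1$ non-tangentially; this is precisely restrictedness of $\{w_n\}$. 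Then I would combine the fibered structure of $\tr_p$ (affine hyperplane slices), the uniform geodesic convergence of Proposition~\ref{geo-converge}, and the non-expansiveness of $\rho_p=\v_p\circ\tr_p$ to deduce $k_D(w_n,\rho_p(w_n))\to 0$, i.e., specialness. Special and restricted gives eventual containment in some $K_{z_0}(p,M)$, and the standard fact that distinct $K$-regions are anchored at distinct boundary points rules out any second subsequential limit, yielding $w_n\to p$.

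With $\{w_n\}$ sitting inside a $K$-region at $p$ and $\al_f(p)<+\infty$ (from the estimate above), Theorem~\ref{JWC} applies: $f(p)=\Klim_{z\to p}f(z)$ exists in $\de D$, and evaluating the $K$-limit along the restricted sequence $w_n\to p$ gives $f(p)=\lim_n f(w_n)=\lim_n w_{n-1}=p$, so $p$ is a BRFP. Combined with $\al_f(p)\le s(\{w_k\})$, this proves the lemma. The delicate step, as emphasized, is the geometric analysis of the approach to $p$ via the geodesic projection $\tr_p$ and the verification of the special-and-restricted property; everything else is bookkeeping with the JWC theorem and the chain rule of Lemma~\ref{chainrule}.
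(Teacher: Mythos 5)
Your routine steps are fine: the inequality $k_D(z_0,w_n)-k_D(z_0,f(w_n))=k_D(z_0,w_n)-k_D(z_0,w_{n-1})\le k_D(w_{n-1},w_n)\le\frac12\log s(\{w_k\})$ does give $\al_f(p)\le s(\{w_k\})$ once $w_n\to p$ is known, and once one also knows that the tail of $\{w_n\}$ lies in some $K$-region the identity $f(p)=\lim_n f(w_{n+1})=\lim_n w_n=p$ follows from Theorem~\ref{JWC}. But the entire content of the lemma sits in your step (i), and there the argument is only a sketch with a genuine gap. The proposed reduction to the disc does not work as stated: setting $\zeta_n=\tr_p(w_n)$, there is no relation between $\tr_p\bigl(f(\v_p(\zeta_{n+1}))\bigr)$ and $\zeta_n$, because $w_{n+1}$ need not lie anywhere near the geodesic disc $\v_p(\D)$; so $\{\zeta_n\}$ is not, even approximately, a backward orbit of the self-map $\tr_p\circ f\circ\v_p$ of $\D$, and the one-dimensional theory of \cite{PC2} cannot be invoked for it. The assertions that $\{w_n\}$ is restricted, that $k_D(w_n,\rho_p(w_n))\to0$, and that a second subsequential limit is excluded are all announced (``the idea is\dots'', ``I would combine\dots'') rather than proved.

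More seriously, the route itself cannot be completed in the stated generality: eventual containment in a $K$-region is \emph{false} when $\al_f(p)=1$, i.e.\ when the limit point is the Denjoy--Wolff point. This is exactly why Theorem~\ref{backward}(3) and Proposition~\ref{back-elliptic}(4) carry the extra hypothesis $\al_f(p)>1$ and are derived \emph{after} this lemma, not before it. A model case: for the parabolic automorphism $w\mapsto w+1$ of the upper half-plane, the backward orbit $w_n=w_0-n$ has constant (hence bounded) hyperbolic step and converges to the Denjoy--Wolff point at infinity tangentially, along a horocycle; transported to $\D$ (or to a strongly convex $D$) such an orbit is not restricted and lies in no $K$-region, so you cannot identify $f(p)$ with $p$ by evaluating the $K$-limit along the orbit. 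Note that the paper gives no proof of this lemma: it quotes \cite[Lemmas 2.2 and 2.3]{AR1}, where the argument is of a different nature --- single-point convergence comes from boundary estimates for the Kobayashi distance in strongly convex domains, and $f(p)=p$ from Julia's lemma (the horosphere inclusion $f(E_{z_0}(p,R))\subseteq E_{z_0}(f(p),\al_f(p)R)$) combined with the comparability of $d(w_n,\de D)$ and $d(w_{n+1},\de D)$ forced by the bounded step; that argument requires no non-tangential approach at all, which is precisely the point your proposal misses. To repair your proof you would need to replace the $K$-region mechanism by a horosphere/Julia-type argument (or restrict the $K$-region argument to the case $\al_f(p)>1$ and treat $\al_f(p)=1$ separately), and supply an actual proof of single-point convergence.
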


The following result is proved in \cite[Thm. 0.1, Lemma 2.3, Thm. 3.3]{AR1}):

\begin{theorem}\label{backward}
Let $D\subset \C^N$ be a bounded strongly convex domain with smooth boundary, and let $f\colon  D \to D$ be a holomorphic map either strongly elliptic or non-elliptic.
\begin{enumerate}
  \item If $\{w_k\}$ is a backward iteration sequence  then $\{z_k\}$ converges to $p\in \br_A(f)$ for some $A\geq 1$ and $\al_f(p)\leq s(\{w_k\})$. In case $\al_f(p)=1$ then $f$ is non-elliptic and $p$ is the Denjoy-Wolff point of $f$.
  \item If $p\in \de D$ is an isolated boundary repelling fixed point of $f$,  then there exists a backward iteration sequence $\{w_k\}$ converging to $p$ with hyperbolic step $s(\{w_k\})\leq \al_f(p)$.
  \item If $\{w_k\}$ is a backward iteration sequence which converges to $p$ and $\al_f(p)>1$, then there exists $M>1$ and $k_0\in \N$ such that $\{w_k\}_{k\geq k_0}\subset K_{z_0}(p,M)$.
\end{enumerate}
\end{theorem}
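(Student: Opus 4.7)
The plan is to handle the three parts in order, leveraging monotonicity of the hyperbolic step for (1), a reduction to the unit disc via a complex geodesic for (2), and a Julia-type horofunction estimate for (3).

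For (1), set $a_k := k_D(w_k, w_{k+1})$. Applying the distance-decreasing $f$ to the pair $(w_{k+1}, w_{k+2})$ yields $a_k = k_D(f(w_{k+1}), f(w_{k+2})) \leq a_{k+1}$, so $\{a_k\}$ is nondecreasing and bounded by $\tfrac{1}{2}\log s(\{w_k\})$. I would then argue that $\{w_k\}$ escapes to $\de D$, apart from the trivial case where the sequence sits at an interior fixed point. In the strongly elliptic case with fixed point $x$, the sequence $\{k_D(x, w_k)\}$ is nondecreasing; if bounded, then $\{w_k\}$ lies in a Kobayashi ball $K$ and the identity $w_k = f^{\circ n}(w_{k+n})$ together with the uniform convergence $f^{\circ n}|_K \to x$ from Theorem \ref{Wolff}(2) collapses $\{w_k\}$ to $\{x\}$. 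In the non-elliptic case the same identity combined with $f^{\circ n}|_K \to \tau \in \de D$ directly contradicts $w_k \in D$. Lemma \ref{back-boundary} then yields convergence to a BRFP $p$ with $\al_f(p) \leq s(\{w_k\})$. The case $\al_f(p) = 1$ forces $f$ non-elliptic (Theorem \ref{Wolff}(2) excludes $\al_f = 1$ anywhere on the boundary in the strongly elliptic case), and then $p = \tau$ because Theorem \ref{Wolff}(3) singles out $\tau$ as the unique element of $\br_1(f)$.

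For (2), given an isolated BRFP $p$ with $\al_f(p) > 1$, I would reduce to the disc via the complex geodesic $\v_p$ and its left inverse $\tr_p$. By Theorem \ref{JWC}(1) the map $g := \tr_p \circ f \circ \v_p$ is a self-map of $\D$ with a BRFP at $1$ and angular derivative $\al_f(p) > 1$, so the one-dimensional theory (Cowen--Pommerenke) provides a backward iteration sequence $\{\zeta_k\} \subset \D$ for $g$ with $\zeta_k \to 1$ and $s(\{\zeta_k\}) \leq \al_f(p)$. The naive lift $w_k := \v_p(\zeta_k)$ fails since $f \circ \v_p \neq \v_p \circ g$ in general. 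Instead I would construct $\{w_k\}$ by a compactness argument: form finite pull-back towers $w_0^{(n)}, \dots, w_n^{(n)}$ with $f(w_{k+1}^{(n)}) = w_k^{(n)}$, starting from $w_n^{(n)}$ near $\v_p(\zeta_n)$, and extract a diagonal normal-families limit; the distance-decreasing property of $\tr_p$ transfers the hyperbolic-step bound from the one-dimensional model. The isolated hypothesis, combined with Lemma \ref{back-boundary} applied to the resulting infinite sequence, pins down its boundary limit to $p$ rather than to any nearby BRFP with dilation $\leq \al_f(p)$.

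For (3), since a sequence converging to $p$ is eventually contained in a $K$-region if and only if it is special and restricted, the task reduces to verifying both for $\{w_k\}$. Restrictedness follows from a Julia-type horofunction inequality of the form $h_p(f(z), z_0) \leq h_p(z, z_0) + \tfrac{1}{2}\log \al_f(p) + O(1)$, applied to $w_k = f(w_{k+1})$ and combined with $k_D(z_0, w_{k+1}) \leq k_D(z_0, w_k) + \tfrac{1}{2}\log s$, which together keep the defining quantity $h_p(w_k, z_0) + k_D(z_0, w_k)$ of the $K$-region bounded. Specialness follows from the bounded step and the distance-decreasing projection $\rho_p = \v_p \circ \tr_p$. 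The main obstacle in the whole proof is (2): the absence of a global inverse of $f$ and of a ready higher-dimensional Cowen--Pommerenke estimate means the backward sequence must be built from scratch via a delicate compactness argument, and the isolated hypothesis on $p$ is precisely what prevents the construction from drifting to a neighboring boundary repelling fixed point.
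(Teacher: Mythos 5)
Your part (1) is essentially correct and follows the standard route (monotonicity of the steps, escape to the boundary via the Denjoy--Wolff behaviour of the forward iterates on compacta, then Lemma \ref{back-boundary} and the classification in Theorem \ref{Wolff}); note, incidentally, that the paper itself does not prove Theorem \ref{backward} but quotes it from \cite{AR1}, and the genuine content of that proof is exactly what your sketch of (2) and (3) leaves out.

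In (2) there are three concrete gaps. First, the step bound cannot be ``transferred via the distance-decreasing property of $\tr_p$'': that property gives $k_{\D}(\tr_p(z),\tr_p(w))\le k_D(z,w)$, i.e.\ it carries upper bounds from $D$ down to $\D$, while you need the reverse direction, namely an upper bound for the steps of the towers, $k_D\bigl(w^{(n)}_k,w^{(n)}_{k+1}\bigr)\le k_D\bigl(x_n,f(x_n)\bigr)$ with $x_n$ the top point; the correct tool is Theorem \ref{JWC}(2) (the curve $t\mapsto f(\v_p(t))$ is \emph{special}), which gives $k_D\bigl(\v_p(t),f(\v_p(t))\bigr)\le k_{\D}(t,\tr_p(f(\v_p(t))))+o(1)\to \tfrac12\log\al_f(p)$, and makes the one-dimensional backward orbit of $g$ superfluous. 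Second, nothing in your compactness argument prevents the towers from degenerating: for fixed $k$ the points $f^{\circ(n-k)}(x_n)$ may drift to the interior fixed point (strongly elliptic case) or to the Denjoy--Wolff point (non-elliptic case) as $n\to\infty$, and ruling this out by horosphere/Julia estimates that keep the whole construction in a prescribed neighbourhood of $p$ is precisely the delicate core of the proof in \cite{AR1} (compare the localized statement ``$\{w_k\}\subset U$'' used in the proof of Proposition \ref{back-elliptic}(3)). Third, isolation is applied incorrectly: it only excludes points of $\br_{\al_f(p)}(f)$ inside a neighbourhood $U$ of $p$, so Lemma \ref{back-boundary} pins the limit to $p$ only once you know the limit lies in $U$; without the localization just mentioned, the limit orbit could a priori converge to a distant BRFP with smaller dilation, and isolation says nothing about that. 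In (3) the two inequalities you combine point the wrong way: Julia's lemma at $p$, applied to $w_k=f(w_{k+1})$, bounds the horofunction $\lim_{w\to p}[k_D(\cdot,w)-k_D(z_0,w)]$ at $w_k$ from \emph{above} by its value at $w_{k+1}$ plus $\tfrac12\log\al_f(p)$, i.e.\ it gives a \emph{lower} bound at $w_{k+1}$; together with $k_D(z_0,w_{k+1})\le k_D(z_0,w_k)+\tfrac12\log s$ this only shows the $K$-region functional grows at most linearly, not that it stays bounded. Likewise ``specialness follows from the bounded step and the projection $\rho_p$'' is an assertion, not an argument: the actual proof (\cite{AR1}, Lemma 2.5, invoked again for Proposition \ref{back-elliptic}(4)) rests on a uniform positive lower bound for $k_D(z_0,w_{k+1})-k_D(z_0,w_k)$ --- this is where $\al_f(p)>1$ really enters --- plus nontrivial estimates relating the orbit to the Lempert projection onto $\v_p(\D)$.
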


We examine now the rotational elliptic case. In order to state our result, we need some notations. Let $D\subset \C^N$ be a bounded strongly convex domain with smooth boundary and let $f\colon D\to D$ be a holomorphic rotational elliptic map. By \cite[Thm. 2.1.29]{A} there exists a closed complex submanifold $M\subset D$ and a holomorphic map $\pi\colon  D \to D$ such that $\pi(D)=M$, $\pi\circ \pi=\pi$, $\pi\circ f=f\circ\pi$, $f(M)=M$ and $f|_M$ is an automorphism of $M$. The manifold $M$ is called the {\sl limit manifold} of $f$ since for all $z\in D\setminus M$ the limit set of $\{f^{\circ m}(z)\}$ belongs to $M$.
In particular, if $Z=\{z\in D \mid f(z)=z\}$, it follows that $\emptyset\neq Z\subseteq M$. The map $\pi$ is called the {\sl limit retraction} of~$f$, and it can be obtained as limit of a sequence of iterates of~$f$; and moreover if $\pi'\colon D\to D$ is another limit of a sequence of iterates of~$f$ such that $\pi'\circ\pi'=\pi'$ then $\pi'=\pi$. In particular, all iterates of $f$ have the same limit retraction and the same limit manifold.

\begin{proposition}\label{back-elliptic}
Let $D\subset \C^N$ be a bounded strongly convex domain with smooth boundary, and let $f\colon  D \to D$ be a rotational elliptic holomorphic map and let $M$ be the limit manifold of $f$.
\begin{enumerate}
\item A point $p\in \de D$  is a regular contact point of $f$ with $\al_f(p)=1$ if and only if $p\in \overline{M}\cap\de D$.
  \item If $\{w_k\}\subset D$ is a backward iteration sequence and there exists $R>0$ such that $\inf_{y\in M}\|w_n-y\|\geq R$  then $\{w_k\}$ converges to $p\in \br_A(f)$ for some $A> 1$   and $s(\{z_k\})\geq \al_f(p)>1$.
  \item If $p\in \de D$ is an isolated boundary repelling fixed point of $f$,  then there exists a backward iteration sequence $\{w_k\}$ converging to $p$ with hyperbolic step $s(\{w_k\})\leq \al_f(p)$.
  \item If $\{w_k\}$ is a backward iteration sequence which converges to $p$ and $\al_f(p)>1$, then there exists $M>1$ and $k_0\in \N$ such that $\{w_k\}_{k\geq k_0}\subset K_{z_0}(p,M)$.
\end{enumerate}
\end{proposition}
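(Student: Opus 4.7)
My plan is to prove the four items in order; (1) and (2) contain the new content, while (3) and (4) are localizations of the arguments from \cite{AR1}. Throughout, I fix a fixed point $z_0\in Z\subset M$ of $f$ and use three structural facts about rotational elliptic maps: $f|_M$ is a holomorphic (hence $k_M$-isometric) automorphism of $M$; $M$ is a totally geodesic complex submanifold of $D$ (as the image of a holomorphic retraction of a taut manifold); and by the description of $\pi$ as a limit of iterates there is a subsequence $\{n_k\}\subset\N$ with $f^{\circ n_k}\to\pi$ uniformly on compact subsets of $D$.

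For the easy direction of (1), given $p\in\overline M\cap\de D$ the geodesic $\v_p$ lies in $M$ by total geodesicity, so $f\circ\v_p$ is again a complex geodesic of $D$ and by uniqueness equals $\v_{f(p)}$; hence $\tr_{f(p)}\circ f\circ\v_p=\mathrm{id}_\D$ and Theorem \ref{JWC}(1) yields $\al_f(p)=1$. For the converse, assume $\al_f(p)=1$. Since $f(z_0)=z_0$ one has $g(0)=0$ for the disc self-map $g:=\tr_{f(p)}\circ f\circ\v_p$, which has BRFP at $1$ with dilation $1$; the rigidity case of Julia's lemma applied at $\zeta=0$ forces $g=\mathrm{id}_\D$. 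The Schwarz--Pick bound for $\tr_{f(p)}$ together with this equality shows $f\circ\v_p$ is a complex geodesic, and uniqueness yields $f\circ\v_p=\v_{f(p)}$; iterating, $f^{\circ n}\circ\v_p=\v_{f^{\circ n}(p)}$ for all $n$. Taking $n_k\to\infty$, Proposition \ref{geo-converge} gives $\v_{f^{\circ n_k}(p)}\to\pi\circ\v_p$ uniformly on $\overline\D$, so $\pi\circ\v_p$ is a complex geodesic contained in $M$ and $f^{\circ n_k}(p)=\v_{f^{\circ n_k}(p)}(1)\to(\pi\circ\v_p)(1)\in\overline M\cap\de D$. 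A final argument identifying this boundary limit with $p$ itself---exploiting that $(f^{\circ n_k}|_M)'(z_0)$ can be chosen to approach the identity on $T_{z_0}M$ while its action on the normal directions collapses, so the limiting geodesic $\pi\circ\v_p$ has the same ``$M$-direction'' at $z_0$ as $\v_p$---gives $p\in\overline M\cap\de D$.

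For (2), the sequence $\{w_k\}$ must converge to $\de D$: otherwise a subsequence $w_{k_j}\to w\in D$ combined with a Montel extraction $f^{\circ k_j}\to h$ uniformly on compacta---with $h(D)\subset M$ by the rotational elliptic structure---would yield $w_0=\lim f^{\circ k_j}(w_{k_j})=h(w)\in M$, contradicting $\mathrm{dist}(w_0,M)\ge R$. Lemma \ref{back-boundary} then gives a BRFP $p\in\de D$ with $w_k\to p$ and $\al_f(p)\le s(\{w_k\})$; passing to the limit in $\mathrm{dist}(w_k,M)\ge R$ yields $\mathrm{dist}(p,M)\ge R$, so $p\notin\overline M$, and part (1) forces $\al_f(p)>1$.

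Parts (3) and (4) are local statements near an isolated boundary repelling fixed point, and the proofs of \cite[Lemma 2.3 and Thm. 3.3]{AR1} transfer with only cosmetic changes: the backward iteration sequence in (3) is built by a Julia horosphere argument applied to a local inverse of $f$ at $p$, and the $K$-region capture in (4) follows from uniform control of the Kobayashi step along the backward orbit; neither argument uses convergence of $\{f^{\circ m}\}$, which is the only place where the rotational elliptic case differs from the non-elliptic and strongly elliptic cases already treated in \cite{AR1}. The main obstacle I anticipate is the converse of (1), where the delicate step is extracting the conclusion $p\in\overline M$ from the geodesic identification $f\circ\v_p=\v_{f(p)}$; this requires the careful asymptotic analysis of the iterated geodesics $\v_{f^{\circ n_k}(p)}$ together with a judicious choice of the subsequence $\{n_k\}$ so that $f^{\circ n_k}|_M\to\mathrm{id}_M$.
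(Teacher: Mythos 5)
The main gap is in the converse direction of part (1). After obtaining $f\circ\v_p=\v_{f(p)}$ you iterate to get $f^{\circ n}\circ\v_p=\v_{f^{\circ n}(p)}$, but this already needs justification: it requires $\al_f(f^{\circ k}(p))$ to be finite (in fact $=1$) at every point of the forward orbit, and the identity $f\circ\v_p=\v_{f(p)}$ only gives information about $f$ along $\v_p(\D)$, not along $\v_{f(p)}(\D)$. More seriously, even granting the iteration, your limit argument only shows that $\pi\circ\v_p$ is a complex geodesic contained in $M$ and that $f^{\circ n_k}(p)=\v_{f^{\circ n_k}(p)}(1)\to(\pi\circ\v_p)(1)\in\overline{M}\cap\de D$; this is a statement about the forward orbit of $p$, not about $p$. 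To conclude $p\in\overline{M}$ you need $\pi\circ\v_p=\v_p$, i.e.\ $\v_p(\D)\subset M$, equivalently that $\v_p'(0)$ lies in the ``unitary'' subspace $L_U=T_{z_0}M$ of the invariant splitting $T_{z_0}D=L_N\oplus L_U$. Your ``final argument'' (that $(f^{\circ n_k}|_M)'(z_0)\to{\sf id}$ on $T_{z_0}M$ while the normal directions collapse) only identifies $(\pi\circ\v_p)'(0)$ with the $L_U$-component of $\v_p'(0)$; it says nothing about whether the $L_N$-component of $\v_p'(0)$ vanishes, which is exactly the point at issue. The paper closes this by a rigidity argument (its Claim A): since $f\circ\v_p$ is a geodesic, $\kappa_D(z_0;\v_p'(0))=\kappa_D(z_0;df_{z_0}(\v_p'(0)))$, and the spectral structure of $df_{z_0}$ forces $\v_p'(0)\in L_U$, whence $\pi\circ\v_p=\v_p$ by uniqueness of infinitesimal isometries and so $p=\v_p(1)\in\overline{M}$. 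Some substitute for this step is indispensable; without it your argument does not prove the converse of (1). (Also, in the forward direction, ``$\v_p$ lies in $M$ by total geodesicity'' is not immediate since $p$ is a boundary point: one first shows that $D$-geodesics through two points of $M$ lie in $M$ and then passes to the limit via Proposition \ref{geo-converge}, as the paper does; this is a smaller but real omission.)

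Concerning (3) and (4), asserting that the arguments of \cite{AR1} transfer ``with only cosmetic changes'' overlooks where the rotational elliptic case genuinely differs: the AR1 proofs rely on a uniform drift estimate of the type $\liminf_k[k_D(z_0,w_{k+1})-k_D(z_0,w_k)]>0$, which comes from the strongly elliptic or non-elliptic dynamics and which fails near $M$ here (on $M$, $f$ is a $k_D$-isometry). The paper supplies the missing ingredient through its Claim B (uniform strict contraction of $k_D(\cdot,z_0)$ at uniformly positive Euclidean distance from $M$, whose proof uses part (1)), applies it in (4) because $w_k\to p\notin\overline{M}$, and in (3) confines the AR1 construction to a neighborhood $U$ with $\overline{U}\cap\overline{M}=\emptyset$ so that part (2) guarantees the constructed backward orbit converges to a BRFP in $U$, which isolation identifies with $p$. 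Your proposal does not provide these inputs. On the positive side, your proof of (2) is correct and genuinely different from the paper's: instead of the uniform contraction estimate, you use that $f^{\circ k_j}(w_{k_j})=w_0$ together with a Montel limit $h$ of iterates satisfying $h(D)\subset M$ to exclude interior limit points; this is shorter, but note it cannot replace Claim B, which is still needed for (4).
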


\begin{proof}
(1) Let $p\in \de D\cap \overline{M}$. Let $z_0\in M$ be such that $f(z_0)=z_0$, and let $\v_p\colon \D \to D$ be the complex geodesic such that $\v_p(0)=z_0$, $\v_p(1)=p$.

We claim that $\v_p(\D)\subset M$. Since $\pi\circ \pi=\pi$ and by the decreasing property of the Kobayashi distance, for all $z,w\in M$
\[
k_M(z,w)= k_M(\pi(z),\pi(w))\leq k_D(z,w)\leq k_M(z,w),
\]
from which it follows that $M$ is totally geodesic in $D$, {\sl i.e.}, $k_D|_M=k_M$. In particular, if $\eta\colon \D \to D$ is a complex geodesic such that $\eta(0)\in M$ and $\eta(r)\in M$ for some $r\in (0,1)$, we have
\[
k_D(\pi(\eta(0)), \pi(\eta(r)))=k_D(\eta(0),\eta(r))=k_{\D}(0,r),
\]
and hence $\pi \circ \eta\colon  \D \to D$ is also a complex geodesic. By the uniqueness of complex geodesics up to pre-composition with automorphisms of $\D$, it follows that in fact $\pi\circ \eta(\zeta)=\eta(\zeta)$ for all $\zeta\in \D$. Thus, $\eta(\D)\subset M$ and $\eta$ is  a complex geodesic both for $D$ and for $M$. Now, let $\{w_m\}\subset M$ be a sequence which converges to $p$ and let $\v_m\colon \D\to D$ be the complex geodesic such that $\v_m(0)=z_0$ and $\v_m(r_m)=w_m$ for some $r_m\in (0,1)$. For what we just proved, $\v_m(\D)\subset M$. Up to subsequences, we can assume that $\{\v_m\}$ converges uniformly on compacta to a holomorphic map $h\colon \D \to M$ such that $h(0)=z_0$. By Proposition \ref{geo-converge}, $\v_m\to h$ uniformly on $\oD$ and $h$ is a complex geodesic for   $D$ (and for $M$). But, since $\v_m(r_m)\to p$, it follows that $h(1)=p$ and hence $h=\v_p$, which proves that $\v_p(\D)\subset M$.

Now, since $f\colon M\to M$ is an automorphism, and in particular an isometry for the Kobayashi distance, $f\circ \v_p\colon \D \to M$ is a complex geodesic in $M$, and hence in $D$. In particular, $\lim_{r\to 1}f(\v_p(r))=q\in \de D$ exists and $\v_q\equiv f\circ \v_p$. Therefore,  $\tr_q(f(\v_p(\zeta)))=\zeta$ for all $\zeta\in \D$ and by Theorem \ref{JWC}.(1), $\al_f(p)=1$.

Conversely, assume $p\in \de D$ is a regular contact point with $\al_f(p)=1$. Let $q=f(p)$ and $z_0\in Z$. Let $\v_p, \v_q$ be the complex geodesics such that $\v_p(0)=\v_q(0)=z_0$ and $\v_p(1)=p$, $\v_q(1)=q$. Then $g:=\tr_q\circ f\circ \v_p$ is a holomorphic self-map of $\D$ such that $g(0)=0$ and $g(1)=1$, $\al_g(1)=\al_f(p)=1$ by Theorem \ref{JWC}. As a consequence of the classical Julia's lemma \cite[Corollary 1.2.10]{A}, $g(\zeta)=\zeta$ for all $\zeta\in \D$. Hence for all $\zeta_1,\zeta_2\in \D$
\[
k_{\D}(\zeta_1,\zeta_2)\leq k_D(f(\v_p(\zeta_1)),f(\v_p(\zeta_1)))\leq k_D(\v_p(\zeta_1), \v_p(\zeta_2))=k_{\D}(\zeta_1,\zeta_2),
\]
and thus $f\circ \v_p$ is a complex geodesic. The result follows then from:

\smallskip
{\bf Claim A:} if $\v\colon \D \to D$ is a complex geodesic such that $\v(0)=z_0\in Z$ and $f\circ \v\colon  \D \to D$ is also a complex geodesic, then $\v(\D)\subset M$.
\smallskip

In order to prove Claim 1., recall that by \cite[Thm. 2.1.21]{A}, the tangent space to $D$ at $z_0$ admits a $df_{z_0}$-invariant splitting $T_{z_0}D=L_N\oplus L_U$ such that $T_{z_0}M=L_U$ and the spectrum of $df_{z_0}$ in $L_U$ is contained in $\de \D$, while the spectrum of $df_{z_0}$ in $L_N$ is contained in $\D$. Note that   $d\pi_{z_0}|_{L_U}={\sf id}$. Therefore  if $\eta\colon \D \to D$ is a complex geodesic such that $\eta(0)=z_0$ and $\eta'(0)\in L_U$, then $\pi\circ \eta\colon \D \to D$ is a holomorphic map such that $\kappa_D(z_0; (\pi\circ \eta)'(0))=\kappa_{\D}(0; 1)$, hence it is an infinitesimal isometry for the Kobayashi metric. Thus $\pi\circ \eta$ is a complex complex, and by the uniqueness of infinitesimal isometry, $\pi\circ \eta=\eta$. Hence $\eta(\D)\subset M$.

In the hypothesis of Claim A,   $\kappa_D(z_0; \v'(0))=\kappa_D(z_0; df_{z_0}(\v'(0))$. Hence $\v'(0)$ belongs to $L_U$ and therefore $\v(\D)\subset M$.

(2) {\sl Step 1.} Let $z_0\in Z$. We claim that
\begin{equation}\label{stimaM}
k_D(z,z_0)>k_D(f(z), z_0)\quad \forall z\in D\setminus M.
\end{equation}
Clearly, $k_D(f(z), z_0)\leq k_D(z,z_0)$. Assume by contradiction that $k_D(f(z), z_0)=k_D(z,z_0)$ for some $z\in D\setminus M$. Let $\v\colon  \D \to D$ be the complex geodesic such that $\v(0)=z_0$ and $\v(r)=z$ for some $r\in (0,1)$. Then
\[
k_D(f(\v(r)),\v(0))=k_D(f(z), z_0)=k_D(z,z_0)=k_D(\v(r), \v(0)),
\]
and therefore $f\circ \v$ is a complex geodesic in $D$. By Claim A, $\v(\D)\subset M$, thus $z\in M$, a contradiction.

{\sl Step 2.} Let $z_0\in Z$.
\medskip

{\bf Claim B:} For all $R_0>0$ there exists $0<c=c(R_0)<1$ such that for all $z\in D$ with $\inf_{y\in M}\|z-y\|\geq R_0$,  it holds
\begin{equation}\label{stimac}
k_D(f(z), z_0)-k_D(z,z_0)\leq \frac{1}{2}\log c<0.
\end{equation}

\medskip
Assume \eqref{stimac} is not true. Then for every $c<1$ there exists $z(c)\in D$ such that $\inf_{y\in M}\|z(c)-y\|\geq R_0$ and
\begin{equation}\label{dist-mm}
k_D(f(z(c)), z_0)-k_D(z(c), z_0)> \frac{1}{2}\log c.
\end{equation}
Let $x\in \overline{D}$ be a limit point of $\{z(1-\frac{1}{n})\}$. Since $\inf_{y\in M}\|z(c)-y\|\geq R_0$, it follows that $x\not\in\overline{M}$. If $x\in D$, it follows from \eqref{dist-mm} that $k_D(f(z(c)), z_0)\geq k_D(z(c), z_0)$, contradicting \eqref{stimaM}. Hence $x\in \de D$ and
\[
\liminf_{z\to x}[k_D(z,z_0)-k_D(f(z),z_0)]\leq 0.
\]
But then $x$ is a boundary regular contact point with $\al_f(x)\leq 1$, and by part (1), $x\in \overline{M}$, again a contradiction.

{\sl Step 3.} Let $\{w_k\}$ be a backward iteration sequence satisfying the hypothesis of (2). Let $c=c(R)$ be given by Step 2. By induction on \eqref{stimac} we obtain that for all $k\in \N$,
\[
e^{-2k_D(w_k,z_0)}\leq c^k e^{-2k_D(w_0,z_0)},
\]
hence $k_D(w_k,z_0)\to \infty$ for $k\to \infty$. Therefore, $w_k\to \de D$. The result follows then from Lemma \ref{back-boundary}.

(3) By (1), $p\not\in \overline{M}$. Let $\epsilon>0$ be such that  $U:=\{z\in \C^N\mid \|z-p\|<\epsilon\}$ has the property that $\overline{U}\cap \overline{M}=\emptyset$ and $U\cap \br_{\al_f(p)}(f)=\{p\}$. Arguing exactly  as in the proof of \cite[Thm. 3.3]{AR1} we can construct a backward iteration sequence $\{w_k\}\subset U$ with  hyperbolic step $s(\{w_k\})\leq \al_f(p)$. Since $\overline{U}\cap \overline{M}=\emptyset$, $\{w_k\}$ satisfies the hypothesis of (2), hence it converges to a boundary regular fixed point of $f$, say $q\in \de D\cap U$, with $\al_f(q)\leq \al_f(p)$. But $U\cap \br_{\al_f(p)}(f)=\{p\}$ implies $q=p$ and we are done.

(4) Since $\{w_k\}$ converges to a boundary repelling fixed point $p\in \de D$, and by (1), $p\not\in\overline{M}$, hence $\{w_k\}$ satisfies the hypothesis of Claim B. Therefore, by \eqref{stimac}
\[
\liminf_{k\to \infty}[k_D(z_0, z_{k+1})-k_D(z_0,z_k)]\geq \frac{1}{2}\log \frac{1}{c}>0.
\]
Now the proof follows arguing exactly as in \cite[Lemma 2.5]{AR1}.

\end{proof}

\section{Semigroups}

Let $D\subset \C^N$ be a bounded strongly convex domain with smooth boundary. A {\sl (continuous) semigroup} $(\phi_t)$ of holomorphic self-maps of $D$ is a continuous homomorphism from the semigroup $(\R^+,+)$ endowed with the Euclidean topology, to the semigroup (with respect of maps composition) of holomorphic self-maps of $D$ endowed with the topology of uniform convergence on compacta (see, {\sl e.g.}, \cite{RS}, \cite{A}, \cite{BCD}).

If $(\phi_t)$ is a semigroup of holomorphic self-maps of $D$, we will denote by $\al_t(p):=\al_p(\phi_t)$ the dilation coefficient of $\phi_t$ at $p\in \de D$.

Also, we denote by ${\sf Fix}(\phi_t):=\{z\in D\mid \phi_t(z)=z\ \forall t\geq 0\}$.

The following result was proved in \cite[Thm. 2.5.24, Prop. 2.5.26]{A}; see also \cite{AS} and \cite[Thm. A.1]{BCD1}.

\begin{theorem}\label{densem}
Let $D\subset \C^n$ be a bounded strongly convex domain with smooth
boundary. Let $(\phi_t)$ be a  semigroup of
holomorphic self-maps of $D$. Then either
\begin{itemize}
\item ${\sf Fix}(\phi_t)\neq \emptyset$, or
\item $\phi_t$ is non-elliptic for all $t>0$ and there exists a unique $\tau \in \partial D$ such that
$\tau$ is the Denjoy-Wolff point of $\phi_t$.
\end{itemize}
\end{theorem}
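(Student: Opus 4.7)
The plan is to dichotomize on whether some $\phi_{t_0}$ has an interior fixed point, and in each case use commutativity together with Theorem \ref{Wolff} applied to individual $\phi_t$ to glue the information across the whole parameter.

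Suppose first that $\phi_{t_0}(z_0) = z_0$ for some $z_0 \in D$ and some $t_0 > 0$. Commutativity forces $\phi_{t_0}(\phi_s(z_0)) = \phi_s(z_0)$ for all $s \geq 0$, so the continuous orbit $\gamma(s) := \phi_s(z_0)$ is $t_0$-periodic and hence compactly contained in $D$. In particular, for every $s > 0$ the iterates $\phi_s^n(z_0) = \phi_{ns}(z_0)$ lie in the compact set $\gamma([0,t_0])$, which by Theorem \ref{Wolff} excludes the non-elliptic alternative for each $\phi_s$. Set $T := \inf\{t > 0 : \phi_t(z_0) = z_0\}$. If $T = 0$, continuity of $s \mapsto \phi_s(z_0)$ gives $\phi_t(z_0) = z_0$ for all $t \geq 0$ and we are done. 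Otherwise $T > 0$, $\phi_T$ is the identity on the closed curve $\Gamma := \gamma([0,T])$, and $F := \{z \in D : \phi_T(z) = z\}$ is a closed complex submanifold of $D$ containing $\Gamma$ and preserved by every $\phi_s$. The relation $\phi_s|_F \circ \phi_{T-s}|_F = \phi_T|_F = {\sf id}_F$ then shows that $(\phi_s|_F)$ factors through a continuous action of $\R / T \Z$ on $F$ by holomorphic automorphisms, and a compact-group fixed-point argument (Bochner's theorem, or the existence of a Kobayashi circumcenter for the compact orbit of any interior point) produces $z^* \in F$ with $\phi_s(z^*) = z^*$ for all $s \geq 0$, giving ${\sf Fix}(\phi_t) \neq \emptyset$.

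Suppose instead that no $\phi_{t_0}$ with $t_0 > 0$ has an interior fixed point. Then by Theorem \ref{Wolff} each $\phi_t$ is non-elliptic with Denjoy-Wolff point $\tau_t \in \partial D$. To see $\tau_t$ does not depend on $t > 0$, fix $z \in D$ and $s, t_0 > 0$. For any $t \geq 0$ write $t = n t_0 + r$ with $0 \leq r < t_0$; then
\[
\phi_t(z) \;=\; \phi_r(\phi_{n t_0}(z)) \;=\; \phi_{n t_0}(\phi_r(z)),
\]
and the single-map Denjoy-Wolff theorem for $\phi_{t_0}$ at the base point $\phi_r(z) \in D$ gives $\phi_{n t_0}(\phi_r(z)) \to \tau_{t_0}$ as $n \to \infty$, uniformly for $r \in [0, t_0]$ by compactness of the orbit segment $\{\phi_r(z) : 0 \leq r \leq t_0\} \subset D$. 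Hence $\lim_{t \to \infty} \phi_t(z) = \tau_{t_0}$; repeating the argument with $s$ in place of $t_0$ yields $\tau_{t_0} = \tau_s$ for every $s > 0$.

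The main obstacle lies in the elliptic case: producing a single point fixed simultaneously by every $\phi_s$ (rather than separate fixed points for each $\phi_s$) requires the continuous-parameter structure, which enters essentially through the circle action on the fixed-point manifold of $\phi_T$ and a compact-group fixed-point argument. Every other step reduces to commutativity together with Theorem \ref{Wolff} applied element by element.
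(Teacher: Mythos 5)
The paper does not actually prove Theorem \ref{densem}: it quotes it from \cite[Thm. 2.5.24, Prop. 2.5.26]{A} and \cite[Thm. A.1]{BCD1}, so yours is a reconstruction rather than a variant of an internal argument. Your non-elliptic half is complete and correct: writing $\phi_t=\phi_{t_0}^{\circ n}\circ\phi_r$ and using uniform convergence of $\phi_{t_0}^{\circ n}$ on the compact arc $\{\phi_r(z):0\le r\le t_0\}$ shows $\lim_{t\to\infty}\phi_t(z)=\tau_{t_0}$ for every $t_0>0$, so all Denjoy--Wolff points coincide. The reduction in the elliptic half is also sound: $S=\{t\ge 0:\phi_t(z_0)=z_0\}$ is a \emph{closed additive subsemigroup} (this, not continuity alone, is what makes the case $T=0$ work), $\phi_T(z_0)=z_0$ when $T>0$, the set $F={\sf Fix}(\phi_T)$ is a holomorphic retract preserved by every $\phi_s$, and the restrictions give a continuous action of $\R/T\Z$ by automorphisms of $F$.

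The genuine gap is the step you leave in parentheses, which is exactly the heart of the elliptic case: producing a point fixed by that circle action. Bochner's theorem linearizes a compact group action \emph{at a point already known to be fixed}, so it cannot produce one; and the bare principle ``a compact group of holomorphic automorphisms of a bounded domain has a fixed point'' is false in general (rotations of an annulus), so some convexity input must enter explicitly. The Kobayashi circumcenter is the right idea, but \emph{existence} of a circumcenter is trivial (the function $z\mapsto\max_{w\in K}k_D(z,w)$ is a continuous exhaustion of the complete hyperbolic domain $D$, hence has minimizers) and by itself gives only a nonempty compact convex set $C$ of minimizers with $\phi_s(C)\subseteq C$ (using that each $\phi_s$ is $k_D$-nonexpansive and permutes the orbit $K=\{\phi_r(z_0):0\le r\le T\}$). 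To conclude you need $C$ to reduce to a single point, i.e.\ a strict convexity property of closed Kobayashi balls; moreover the argument must be run with $k_D$ in the convex domain $D$, not inside $F$, which is not known to be convex or to have strictly convex balls. With that supplement (strict convexity of Kobayashi balls of a strongly convex domain via Lempert's theory, or alternatively the limit-retraction machinery of \cite{A} used in the references the paper quotes), your Case 1 closes; as written, the crucial existence statement is asserted rather than proved.
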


The following result follows from \cite[Corollary 4.8, Prop. 3.3]{BCD}:

\begin{proposition}\label{variesem}
Let $D\subset \C^n$ be a bounded strongly convex domain with smooth
boundary.  Let $(\phi_t)$ be a  semigroup of holomorphic self-maps of $D$.
\begin{enumerate}
  \item Suppose there exists $t_0>0$ such that $\phi_{t_0}$ is strongly elliptic. Then $\phi_t$ is  strongly elliptic for all $t\geq 0$.
  \item If $p\in \de D$ is a BRFP for $\phi_t$ for every $t\geq 0$, then there exists $\lambda\in (0,+\infty)$ such that $\al_t(p)=\lambda^t$.
\end{enumerate}
\end{proposition}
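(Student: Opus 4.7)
For part \emph{(1)}, I would exploit commutativity within the semigroup. Strong ellipticity of $\phi_{t_0}$ gives a unique fixed point $x\in D$; since $\phi_t\circ\phi_{t_0}=\phi_{t_0}\circ\phi_t$, the point $\phi_t(x)$ is also fixed by $\phi_{t_0}$, forcing $\phi_t(x)=x$ for every $t\geq 0$. This rules out alternative (3) in Theorem \ref{Wolff} for $\phi_t$, so it only remains to exclude the rotational elliptic alternative (1). For this I would pass to the differential at $x$: the family $\{d(\phi_t)_x\}_{t\geq 0}$ is a continuous one-parameter semigroup of linear endomorphisms of $T_xD$, hence $d(\phi_t)_x=\exp(tA)$ for a unique $A\in\mathrm{End}(T_xD)$. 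By the splitting theorem [A, Thm.~2.1.21] (the one invoked in the proof of Claim A inside Proposition \ref{back-elliptic}), strong ellipticity of $\phi_{t_0}$ is equivalent to $L_U=\{0\}$, i.e., to the whole spectrum of $d(\phi_{t_0})_x=\exp(t_0 A)$ lying in $\D$; equivalently, every eigenvalue $\mu$ of $A$ satisfies $\Re\mu<0$. This property depends only on $A$, so the spectrum of $d(\phi_t)_x=\exp(tA)$ sits in $\D$ for every $t>0$, and the cited theorem applied to $\phi_t$ gives that $\phi_t$ is strongly elliptic.

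For part \emph{(2)}, the plan is to reduce the claim to Cauchy's functional equation. Since $p$ is a BRFP of every $\phi_s$ (in particular $\phi_s(p)=p$ and $\al_s(p)<+\infty$), applying the chain rule of Lemma \ref{chainrule} to $\phi_{t+s}=\phi_t\circ\phi_s$ yields
\begin{equation*}
\al_{t+s}(p)=\al_{\phi_s(p)}(\phi_t)\cdot\al_s(p)=\al_t(p)\,\al_s(p),\qquad s,t\geq 0.
\end{equation*}
Hence $g(t):=\log\al_t(p)$ is real-valued and additive on $\R^+$, with positivity of $\al_t(p)$ guaranteed by \eqref{magzero}.

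The main remaining obstacle is to upgrade this additivity to linearity, which amounts to proving some regularity of $t\mapsto g(t)$ (measurability, or local boundedness from above, each suffices to force an additive function on $\R^+$ to be linear via the classical arguments for Cauchy's equation). My plan is to extract this regularity from the continuity of the semigroup: using the formula $\tfrac12\log\al_t(p)=\liminf_{w\to p}\bigl[k_D(z_0,w)-k_D(z_0,\phi_t(w))\bigr]$ together with the uniform convergence of $\phi_{t_n}$ to $\phi_t$ on compacta as $t_n\to t$ and the joint continuity of $k_D$, I would show that $t\mapsto g(t)$ is upper semicontinuous. Combined with additivity, this gives $g(t)=t\,g(1)$, hence $\al_t(p)=\lambda^t$ with $\lambda:=\al_1(p)\in(0,+\infty)$.
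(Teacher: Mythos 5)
Your argument is correct, and it is a genuinely different route from the paper's: the paper does not prove Proposition \ref{variesem} internally at all, it simply invokes \cite[Corollary 4.8, Prop. 3.3]{BCD}, where both statements are obtained through the infinitesimal generator and the pluripotential-theoretic Julia-type machinery developed there (that route also identifies $\log\lambda$ with a boundary datum of the generator at $p$). Your proof stays inside the iteration-theoretic toolkit already used in this paper: for (1), commutation pins down the common interior fixed point $x$, and the linearized semigroup $d(\phi_t)_x=\exp(tA)$ together with the spectral splitting of \cite[Thm. 2.1.21]{A} transfers strong ellipticity from $\phi_{t_0}$ to every $\phi_t$ with $t>0$ (for $t=0$ the identity is of course not strongly elliptic, so read the statement accordingly). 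Two small points you should make explicit: continuity of $t\mapsto d(\phi_t)_x$ (from uniform convergence on compacta plus Cauchy estimates) to legitimize $\exp(tA)$, and the equivalence ``strongly elliptic $\Leftrightarrow$ spectrum of $d(\phi_{t_0})_x\subset\D$'', which needs the limit-manifold description ($T_xM=L_U$, and a rotational elliptic map has positive-dimensional limit manifold, e.g.\ by the Claim A argument in Proposition \ref{back-elliptic}). Alternatively, (1) follows without linearization: since $\phi_s(x)=x$ for all $s$, for $t>0$ pick $n$ with $nt\geq t_0$ and note $k_D\bigl(\phi_t^{\circ kn}(z),x\bigr)\leq k_D\bigl(\phi_{kt_0}(z),x\bigr)\to 0$, and $k_D(\phi_t^{\circ m}(z),x)$ is non-increasing in $m$, so $\phi_t^{\circ m}\to x$.

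For (2), the multiplicative relation $\al_{t+s}(p)=\al_t(p)\,\al_s(p)$ via Lemma \ref{chainrule} is exactly the right reduction, but the one step I would not leave as sketched is the claimed upper semicontinuity of $g(t)=\log\al_t(p)$: the $\liminf$ defining $\al_t(p)$ is a supremum, over shrinking neighborhoods of $p$, of infima of functions continuous in $t$, and a supremum of upper semicontinuous functions need not be upper semicontinuous, so the limiting argument you describe does not obviously close. Fortunately you do not need it, and you yourself list the sufficient substitutes: that same representation exhibits $g$ as a countable supremum of upper semicontinuous functions of $t$, hence Borel measurable, which already forces linearity of an additive function; or, even more simply, use \eqref{magzero} to get $\al_{T-t}(p)\geq e^{-2k_D(z_0,\phi_{T-t}(z_0))}$, whence $\al_t(p)=\al_T(p)/\al_{T-t}(p)\leq \al_T(p)\,e^{2\max_{s\in[0,T]}k_D(z_0,\phi_s(z_0))}$ for $0\leq t\leq T$, so $g$ is bounded above on each $[0,T]$ and additivity gives $g(t)=t\,g(1)$, i.e.\ $\al_t(p)=\lambda^t$ with $\lambda=\al_1(p)\in(0,+\infty)$. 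With either fix your proof is complete and more elementary than the cited one, at the price of not yielding the generator-theoretic interpretation of $\lambda$.
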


The previous results ensure that the following definition is well-posed:

\begin{definition}
Let $D\subset \C^n$ be a bounded strongly convex domain with smooth boundary. Let $(\phi_t)$ be a  semigroup of holomorphic self-maps of $D$.
\begin{enumerate}
  \item $(\phi_t)$ is {\sl non-elliptic} if $\phi_1$ is non-elliptic. In such a case, if $\tau\in \de D$ is the Denjoy-Wolff point of $\phi_1$, we call $\tau$ the {\sl Denjoy-Wolff point} of $(\phi_t)$.
  \item $(\phi_t)$ is {\sl  strongly elliptic} if $\phi_1$ is  strongly elliptic.
  \item $(\phi_t)$ is {\sl  rotational elliptic} if $\phi_1$ is  rotational elliptic.
\end{enumerate}
\end{definition}

For rotational elliptic semigroups, $(\phi_t)$, let $M_t$ denote the limit manifold of $\phi_t$ for $t>0$. We let
\[
M(\phi_t):=M_1.
\]

\begin{lemma}\label{limmansem}
Let $D\subset \C^n$ be a bounded strongly convex domain with smooth boundary. Let $(\phi_t)$ be a rotational elliptic semigroup of holomorphic self-maps of~$D$. Then $M_t=M(\phi_t)$ for every $t>0$. In particular, $(\phi_t)|_{M(\phi_t)}$ is a group of automorphisms of $M(\phi_t)$.
\end{lemma}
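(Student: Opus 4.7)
The plan is to locate a common fixed point $z_0$ of the semigroup, use the one-parameter group structure of $(d(\phi_t)_{z_0})_{t\geq 0}$ to prove that the tangent space $T_{z_0}M_t$ does not depend on~$t$, and then upgrade this tangent-space equality to the global identification $M_t=M_1$ via the geodesic description of the limit manifold.

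Since $\phi_1$ is rotational elliptic, hence not non-elliptic, Theorem~\ref{densem} yields ${\sf Fix}(\phi_t)\neq\emptyset$; fix $z_0\in{\sf Fix}(\phi_t)$, so that $z_0\in M_t$ for every $t>0$ (the limit manifold contains every fixed point). By the splitting cited in the proof of Proposition~\ref{back-elliptic}.(1) (\cite[Thm.~2.1.21]{A}), we have $T_{z_0}M_t=L_U^t$, the sum of the generalized eigenspaces of $d(\phi_t)_{z_0}$ for eigenvalues in $\de\D$. The map $t\mapsto d(\phi_t)_{z_0}$ is a continuous one-parameter semigroup in $\mathrm{End}(T_{z_0}D)$ with value $\mathrm{Id}$ at $t=0$, so its generator $A:=\frac{d}{dt}\big|_{t=0}d(\phi_t)_{z_0}$ exists and $d(\phi_t)_{z_0}=e^{tA}$ for every $t\geq 0$. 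An eigenvalue $e^{t\lambda}$ of $e^{tA}$ has modulus $1$ iff $\Re\lambda=0$, a condition on $A$ alone; hence $L_U^t$ coincides with the sum of the generalized eigenspaces of $A$ for purely imaginary eigenvalues, i.e.\ with a fixed subspace $L_U\subseteq T_{z_0}D$ independent of $t>0$.

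The next step is to upgrade this tangent-space equality to equality of the manifolds themselves. Applying Claim~A from the proof of Proposition~\ref{back-elliptic}.(1) with $f=\phi_t$, every complex geodesic $\eta\colon\D\to D$ with $\eta(0)=z_0$ and $\eta'(0)\in L_U=L_U^t$ satisfies $\eta(\D)\subseteq M_t$. Conversely, for any $z\in M_t$ the unique complex geodesic of $D$ through $z_0$ and $z$ lies entirely in $M_t$ and therefore has initial direction in $T_{z_0}M_t=L_U$; this uses the total geodesicity of $M_t$ in $D$, which follows from $\pi_t\circ\pi_t=\pi_t$ together with the decreasing property of $k_D$ exactly as in the proof of Proposition~\ref{back-elliptic}.(1). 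Consequently $M_t$ is precisely the union of the images of complex geodesics through $z_0$ with initial direction in $L_U$, a description that does not depend on $t$, yielding $M_t=M_1=M(\phi_t)$ for every $t>0$.

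Once $M_t=M(\phi_t)$ is established, the final assertion is immediate: by the definition of the limit manifold $\phi_t|_{M_t}=\phi_t|_{M(\phi_t)}$ is an automorphism of $M(\phi_t)$ for every $t\geq 0$, so $(\phi_t|_{M(\phi_t)})_{t\geq 0}$ is a continuous one-parameter semigroup in $\mathrm{Aut}(M(\phi_t))$; it extends uniquely to a one-parameter group by setting $\phi_{-t}|_{M(\phi_t)}:=(\phi_t|_{M(\phi_t)})^{-1}$ for $t>0$, with compatibility verified via the commutativity $\phi_s\circ\phi_t=\phi_t\circ\phi_s$. The most delicate point in the plan is the converse inclusion in the previous paragraph, namely that $M_t$ is exhausted by the complex geodesics through $z_0$ with direction in $L_U$; this is where the total geodesicity of $M_t$ in $D$ is essential.
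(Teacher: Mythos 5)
Your argument is correct, but it follows a genuinely different route from the paper. The paper's proof is purely arithmetic in $t$: since $\phi_1=(\phi_{1/q})^{\circ q}$ and the limit manifold (and limit retraction) of an iterate coincides with that of the original map, one gets $M_{1/q}=M_1$, hence $M_{p/q}=M_1$ for all $p/q\in\Q^+$, and then one passes to all $t>0$ by density of $\Q^+$ in $\R^+$. You instead linearize at a common fixed point $z_0\in{\sf Fix}(\phi_t)$ (which exists by Theorem \ref{densem}), identify $T_{z_0}M_t$ with the unimodular spectral subspace of $d(\phi_t)_{z_0}=e^{tA}$, observe that this subspace equals the spectral subspace of $A$ for purely imaginary eigenvalues and is therefore $t$-independent, and then reconstruct $M_t$ globally from its tangent space as the union of complex geodesics through $z_0$ with initial direction in $L_U$, using Claim A and the total geodesicity of the limit manifold exactly as in the proof of Proposition \ref{back-elliptic}.(1). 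Both halves of your geodesic description check out (the inclusion of such geodesics in $M_t$ is Claim A's mechanism; the converse uses that the geodesic through two points of $M_t$ stays in $M_t$), and the spectral bookkeeping between generalized eigenspaces of $A$ and of $e^{tA}$ is sound since $|e^{t\lambda}|=1$ iff $\Re\lambda=0$ for $t>0$. What each approach buys: the paper's proof is much shorter and needs no linearization theory, but its final step from rational to arbitrary $t$ is left implicit and tacitly requires a continuity argument; your proof treats all real $t>0$ uniformly and makes the $t$-independence completely explicit, at the cost of invoking the invariant splitting of \cite[Thm.~2.1.21]{A}, the existence of the generator of the matrix semigroup $t\mapsto d(\phi_t)_{z_0}$ (which does follow from locally uniform continuity of $t\mapsto\phi_t$ plus Cauchy estimates), and the geodesic machinery already developed for Proposition \ref{back-elliptic}. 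The only shared implicit point is that each $\phi_t$ is rotational elliptic, so that $M_t$ is defined at all; this is presupposed by the statement and by the paper's own proof as well, and in any case follows from Proposition \ref{variesem}.(1) and the existence of the interior fixed point $z_0$.
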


\begin{proof}
Since $\phi_1$ is an iterate of $\phi_{1/q}$ for any $q\ge 1$, and the limit manifold of an iterate coincides with the limit manifold of the original map, we have $M_{1/q}=M_1$ for all $q\ge 1$.
For the same reason we have $M_{p/q}=M_{1/q}=M_1$ for all $p/q\in\Q^+$. Since $\Q^+$ is dense in~$\R^+$ it follows that $M_t= M_1$ for all $t>0$.
\end{proof}

\section{Regular contact points for semigroups}\label{Scontact}

Contact points for semigroups of the unit disc were studied in \cite{BP} exploiting the existence of the so-called K\"onigs' function, a tool which is not available in higher dimension. To replace it, we shall use complex geodesics.

\begin{proposition}\label{contatto}
Let $D\subset \C^N$ be a bounded strongly convex domain with smooth boundary. Let $(\phi_t)$ be a semigroup of holomorphic self-maps of $D$. Let $t_0>0$ and let $p\in \de D$ be  a regular contact point for $\phi_{t_0}$. Then there exists $T\in [t_0,+\infty]$ such that $p$ is a regular contact point for $\phi_t$ for all $t\in [0,T)$. Moreover, the curve $[0,T)\ni r\mapsto \phi_r(p)\in \de D$ is continuous.
\end{proposition}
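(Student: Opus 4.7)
The plan is to combine the chain rule with a uniform horosphere estimate. First, I set $T := \sup\{t \geq 0 : \al_t(p) < +\infty\}$. Picking $t \in [0,T)$ and $t' \in (t, T)$, the decomposition $\phi_{t'} = \phi_{t'-t} \circ \phi_t$ together with Lemma~\ref{chainrule} forces $\al_t(p) < +\infty$, so $p$ is a regular contact point for every $\phi_t$ with $t \in [0, T)$, and $T \geq t_0$ is immediate from the hypothesis.

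To prove continuity at a fixed $t^* \in [0, T)$, I choose a compact interval $[a, b] \subset [0, T)$ containing $t^*$ and pick $T' \in (b, T)$. Writing $\phi_{T'} = \phi_{T' - t} \circ \phi_t$ and applying \eqref{chain-eq} gives
\[
\al_t(p) = \frac{\al_{T'}(p)}{\al_{\phi_t(p)}(\phi_{T' - t})}.
\]
Using the lower bound in \eqref{magzero} one has $\al_{\phi_t(p)}(\phi_{T' - t}) \geq e^{-2 k_D(z_0, \phi_{T'-t}(z_0))}$, which is bounded below uniformly in $t \in [a, b]$ by continuity of $s \mapsto \phi_s(z_0)$; hence there exists $M > 0$ with $\al_t(p) \leq M$ for all $t \in [a, b]$. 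Normalizing $z_0 := \v_p(0)$, a direct Kobayashi-distance computation along $\v_p$ shows that $\v_p(r)$ lies on the horosphere at $p$ of radius $R_r := (1-r)/(1+r)$, and Julia's lemma for strongly convex domains (see, e.g., \cite{A} or \cite{BCD}) then yields
\[
\phi_t(\v_p(r)) \in E\bigl(\phi_t(p),\, M R_r\bigr) \qquad \forall\, t \in [a, b],
\]
where $E(q, R)$ denotes the horosphere at $q$ with pole $z_0$. For bounded strongly convex domains with smooth boundary the Euclidean diameter of $E(q, R)$ is $O(\sqrt{R})$ uniformly in $q \in \de D$, and consequently $\phi_t(\v_p(r)) \to \phi_t(p)$ as $r \to 1^-$ uniformly in $t \in [a, b]$.

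Given $\epsilon > 0$, fix $r$ close enough to $1$ that the uniform estimate above gives $|\phi_t(\v_p(r)) - \phi_t(p)| < \epsilon$ for all $t \in [a, b]$. Since the semigroup is continuous, $t \mapsto \phi_t(\v_p(r))$ is continuous at $t^*$, so $|\phi_t(\v_p(r)) - \phi_{t^*}(\v_p(r))| < \epsilon$ for $t$ close to $t^*$, and a three-$\epsilon$ triangle inequality yields $|\phi_t(p) - \phi_{t^*}(p)| < 3\epsilon$, which gives the continuity asserted on all of $[0, T)$. The main obstacle is the uniform horosphere shrinkage in the moving target $\phi_t(p)$: since $\phi_t(p)$ varies with $t$, a non-uniform estimate would not suffice, and one must exploit the strong convexity and smoothness of $\de D$ in a way that is independent of the base point $q \in \de D$. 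A natural alternative would be to lift the semigroup to $\D$ via $\tr_{\phi_t(p)} \circ \phi_t \circ \v_p$ and argue there, but this is less attractive because the continuous dependence of the geodesic $\v_{\phi_t(p)}$ on $t$ (through Proposition~\ref{geo-converge}) is essentially the statement we wish to prove.
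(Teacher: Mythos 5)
Your proof is essentially correct and takes a genuinely different route from the paper's in its second half, with one step that you assert rather than prove. The first half coincides with the paper: the definition of $T$ and the use of Lemma \ref{chainrule} with the splitting $\phi_{t'}=\phi_{t'-t}\circ\phi_t$ is exactly the paper's argument. Your uniform bound $\al_t(p)\le M$ on $[a,b]$ is in fact cleaner than the paper's: you combine \eqref{chain-eq} with the universal lower bound $\al_q(f)\ge e^{-2k_D(z_0,f(z_0))}$ coming from \eqref{magzero}, whereas the paper distinguishes the case in which some $\phi_{\tilde r}(p)$ is the Denjoy--Wolff point and invokes Proposition \ref{variesem}. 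For the continuity itself the two routes diverge: the paper transfers everything to the disc, setting $g_r=\tr_r\circ\phi_r\circ\v_p$, uses $g_r'(1)=\al_r(p)\le C$ to get equicontinuity of $\{g_r\}$ on a closed Stolz angle, extracts convergent subsequences of the geodesics $\v_r$ and of their left inverses via Proposition \ref{geo-converge}, and identifies the limit point through the fiber $\tr^{-1}(1)$; you stay in $D$ and use Julia's lemma together with a uniform Euclidean shrinking of horospheres, plus a three-epsilon argument. Your approach buys a more geometric, quantitative statement ($\|\phi_t(\v_p(r))-\phi_t(p)\|\lesssim\sqrt{MR_r}$ uniformly in $t$, hence continuity without any subsequence extraction); the paper's buys independence from boundary estimates of the Kobayashi distance, relying only on Lempert theory.

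The step you must substantiate is the claim that the Euclidean diameter of $E_{z_0}(q,R)$ is $O(\sqrt R)$ \emph{uniformly} in $q\in\de D$ (in fact all you need is that $\sup_{q\in\de D}\mathrm{diam}\,E_{z_0}(q,R)\to 0$ as $R\to 0$). This is true for bounded strongly convex domains with smooth boundary, but it is not contained in the sources you cite for Julia's lemma and it is not elementary: it requires uniform two-sided boundary estimates for the Kobayashi distance (Balogh--Bonk type estimates, or the comparisons underlying the pluricomplex Poisson kernel of \cite{BPT}), none of which are among the results quoted in this paper. As written, this is the only genuine gap; once such an estimate is supplied, the Julia inclusion $\phi_t(\v_p(r))\in E_{z_0}(\phi_t(p),MR_r)$ and the triangle inequality do yield continuity. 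Finally, your closing remark that lifting to $\D$ via $\tr_{\phi_t(p)}\circ\phi_t\circ\v_p$ would be circular is not accurate: the paper does precisely this without assuming any continuity of $t\mapsto\v_{\phi_t(p)}$, arguing instead by compactness --- every sequence $r_k\to r_0$ admits a subsequence along which the geodesics and their left inverses converge (Montel plus Proposition \ref{geo-converge}), and the limit is identified a posteriori via the left inverse.
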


\begin{proof} If $p$ is  the Denjoy-Wolff point of $(\phi_t)$ (when $(\phi_t)$ is non-elliptic) or it belongs to the closure of ${\sf Fix}(\phi_t)$ (when $(\phi_t)$ is elliptic but not strongly elliptic) then $p$ is a BRFP for all $\phi_t$ by Theorem \ref{Wolff} and the statement is true. Thus, we can suppose that $p$ is neither the Denjoy-Wolff nor in the closure of ${\sf Fix}(\phi_t)$.

Let $T:=\sup {t\in [0,+\infty)}$ such that $p$ is a regular contact point for $\phi_t$. By assumption $T\geq t_0$.

Fix $\epsilon >0$. If $T<+\infty$,  let $T\geq t_1 >T-\epsilon$ be such that $p$ is a regular contact point for $\phi_{t_1}$. If $T=+\infty$, let $t_1\geq 1/\epsilon$ be such that $p$ is a regular contact point for $\phi_{t_1}$. We will show that for all $t\in [0,t_1]$, $p$ is a regular contact point for $\phi_t$. Taking $\epsilon\to 0$, we will get the first statement.

To this aim, let $s$,~$t> 0$ be such that $s+t=t_1$. Since $\phi_{t_1}=\phi_{t+s}=\phi_t\circ \phi_s$, it follows by Lemma \ref{chainrule} and Theorem \ref{JWC} that $p$ is a regular contact point for $\phi_s$, that $\phi_s(p)\in \de D$ is a regular contact point for $\phi_t$ and that
\begin{equation}\label{estimo}
\al_t\bigl(\phi_s(p)\bigr)\cdot \al_s(p)=\al_{t+s}(p)=\al_{t_1}(p)<+\infty\;.
\end{equation}
In particular, it follows that $p$ is a regular contact point for $\phi_t$ for all $t\in [0,T)$.

Now, we consider the curve $[0,T)\ni r\mapsto \phi_r(p)\in \de D$ and we prove that it is continuous. Fix $z_0\in D$. Let  $\v\colon \D \to D$ be the complex geodesic such that $\v(0)=z_0$ and $\v(1)=p$. Let $r_0\in [0,T)$ be fixed. For $r\in [0,T)$ close to $r_0$, let $\v_r\colon \D \to D$ denote the complex geodesic such that $\v_r(0)=z_0$ and $\v_r(1)=\phi_r(p)$ and let $\tr_r\colon D \to \D$ be the left-inverse of $\v_r$. Let define
\[
g_r(\zeta):=\tr_r(\phi_r(\v(\zeta)))\quad \forall \zeta\in \D.
\]
By construction, $g_r\colon \D\to \D$ is holomorphic and by Theorem \ref{JWC}.(1), $1$ is a boundary regular fixed point for $g_r$ with boundary dilation coefficient $\al_r(p)$. Thus, again by the classical Julia-Wolff-Carath\'eodory theorem in $\D$, the derivative $g'_r$ has non-tangential limit $\al_r(p)$ at $1$.

Let fix $t_1<T$ such that $t_1>r_0$. From \eqref{estimo} we get that for all $r\leq t_1$ setting $t=t_1-r$ it holds
$\al_r(p)=\al_{t_1}(p)/\al_t(\phi_r(p))$. Now, if $\al_t(\phi_r(p))\geq 1$ for all $r\in[0,t_1]$,  it follows that $\al_r(p)\leq \al_{t_1}(p)$ for all $r\in [0,t_1]$. On the other hand, if $\al_t(\phi_{\tilde r}(p))< 1$ for some $\tilde r\le t_1$, by Theorem \ref{Wolff} it follows that $q=\phi_{\tilde r}(p)$ is  the Denjoy-Wolff point of $(\phi_t)$; in particular, by  Proposition \ref{variesem}, $\al_u(\phi_{\tilde r}(p))=e^{\beta u}$ for all $u\ge 0$ and some $\beta<0$ independent of~$\tilde r$. It follows that $\al_{\tilde r}(p)\leq \al_{t_1}(p)e^{-\beta t_1}$, and thus we have proved that there exists $C>0$ such that
\[
\al_r(p)\leq C
\]
for all $r\leq t_1$.

Let  $M>1$ and let
\[
\overline{K}:=\bigl\{\zeta \in \oD\bigm| |1-\zeta|\le M(1-|\zeta|)\bigr\} \subset \D\cup\{1\}
\]
be (the closure of) a Stolz angle in $\D$ with vertex $1$ (see, {\sl e.g.}, \cite[pag. 53]{A}). Since $g'_r(1)=\al_r(p)\leq C$ for all $r\leq t_1$, it follows that $\{g_r'\}$ is equibounded in $\overline{K}$. Hence $\{g_r\}$ is equicontinuous on $\overline{K}$ (and it is clearly  equibounded in $\overline{K}$ by $1$). Applying Ascoli-Arzel\`a theorem, we find a subsequence $r_k\to r_0$ such that $\{g_{r_k}\}$ converges uniformly on $\overline{K}$ to some continuous function $g$. In particular, note that $g(1)=1$.

Up to a subsequence, we can also assume that $\{\v_{r_k}\}$ converges uniformly on compacta of $D$ to a holomorphic map $\eta\colon \D \to D$ such that $\eta(0)=z_0$. By Proposition \ref{geo-converge}, $\eta$ is a complex geodesic and  $\v_{r_k}\to \eta$ uniformly on $\oD$. Since $\v_{r_k}(1)=\phi_{r_k}(p)$ by construction,  it follows that $\phi_{r_k}(p)\to \eta(1)$. Let $\tr\colon  D \to \D$ be the left inverse of~$\eta$.  By Proposition \ref{geo-converge}, $\{\tr_{r_k}\}$ converges uniformly on $\overline{D}$ to $\tr$.

Since $\phi_{r_k}\to \phi_{r_0}$ uniformly on compacta of $D$, it follows that $g(\zeta)=\tr(\phi_{r_0}(\v(\zeta)))$ for all $\zeta \in K$. Taking the radial limit at $1$, we obtain
\[
1=\lim_{(0,1)\ni s\to 1}g(s)=\lim_{(0,1)\ni s\to 1}\tr(\phi_{r_0}(\v(s)))=\tr(\phi_{r_0}(p)).
\]
Since the only point in the fiber of $\tr$ over $1$ is the point $\eta(1)$, it follows that $\eta(1)=\phi_{r_0}(p)$, hence $\phi_{r_k}(p)\to \phi_{r_0}(p)$. Repeating the argument for any subsequence, we obtain the result.
\end{proof}

\section{Common BRFP's for semigroups}\label{prova}

\begin{theorem}\label{comune}
Let $D\subset \C^N$ be a bounded strongly convex domain with smooth boundary. Let $(\phi_t)$ be a semigroup of holomorphic self-maps of $D$. Suppose $p\in \de D$    is an isolated boundary repelling fixed point for $\phi_{t_0}$ for some $t_0>0$. Then $p$ is a boundary repelling fixed point for $\phi_t$ for all $t> 0$.
\end{theorem}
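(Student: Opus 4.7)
The plan is to combine the continuity of $r\mapsto \phi_r(p)$ provided by Proposition~\ref{contatto} with the semigroup commutation and the chain rule for boundary dilation coefficients, so as to bring the isolation hypothesis to bear. First, I would apply Proposition~\ref{contatto} to the regular contact point $p$ of $\phi_{t_0}$ to obtain $T\in[t_0,+\infty]$ such that $p$ is a regular contact point of $\phi_t$ for every $t\in[0,T)$, and such that the curve $r\mapsto\phi_r(p)$ is continuous on $[0,T)$ with $\phi_0(p)=p$.

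The central step is to verify that, for every $t\in[0,T)$, the boundary point $\phi_t(p)$ belongs to $\br_{\alpha_{t_0}(p)}(\phi_{t_0})$. The commutation $\phi_{t_0}\circ\phi_t=\phi_t\circ\phi_{t_0}$ together with $\phi_{t_0}(p)=p$ shows, via Theorem~\ref{JWC} and the $K$-limit, that $\phi_{t_0}$ fixes $\phi_t(p)$. Applying Lemma~\ref{chainrule} to the two factorizations of this common composition yields
\[
\alpha_{\phi_t(p)}(\phi_{t_0})\cdot \alpha_p(\phi_t)
= \alpha_p(\phi_{t_0}\circ\phi_t)
= \alpha_p(\phi_t\circ\phi_{t_0})
= \alpha_p(\phi_{t_0})\cdot \alpha_p(\phi_t),
\]
and cancelling the factor $\alpha_p(\phi_t)\in(0,+\infty)$ gives $\alpha_{\phi_t(p)}(\phi_{t_0})=\alpha_{t_0}(p)$, so $\phi_t(p)\in\br_{\alpha_{t_0}(p)}(\phi_{t_0})$.

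The isolation hypothesis now supplies a neighborhood $U$ of $p$ with $\br_{\alpha_{t_0}(p)}(\phi_{t_0})\cap U=\{p\}$, and continuity of the orbit at $r=0$ produces $\delta>0$ such that $\phi_t(p)\in U$, and hence $\phi_t(p)=p$, for all $t\in[0,\delta]$. A standard semigroup bootstrap (writing $t=q\delta+r$ with $0\le r<\delta$ and using $\phi_t(p)=\phi_r(\phi_{q\delta}(p))$) then extends the identity $\phi_t(p)=p$ to every $t\in[0,T)$. To rule out $T<+\infty$, I would pick any $t\in(T/2,T)$: since $\phi_t(p)=p$, Lemma~\ref{chainrule} applied to $\phi_{2t}=\phi_t\circ\phi_t$ gives $\alpha_{2t}(p)=\alpha_t(p)^2<+\infty$, so $p$ would still be a regular contact point of $\phi_{2t}$ at the time $2t>T$, contradicting the maximality of $T$. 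Hence $T=+\infty$, $p$ is a BRFP of every $\phi_t$, and Proposition~\ref{variesem}(2) yields $\lambda\in(0,+\infty)$ with $\alpha_t(p)=\lambda^t$; since $\alpha_{t_0}(p)>1$ by hypothesis, $\lambda>1$, and therefore $\alpha_t(p)>1$ for every $t>0$.

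The step I expect to require the most care is the central one: while it is automatic that $\phi_{t_0}$ fixes $\phi_t(p)$, pinning down the precise value $\alpha_{\phi_t(p)}(\phi_{t_0})=\alpha_{t_0}(p)$ --- rather than merely a finite upper bound --- is what allows the isolation assumption at the sublevel $\br_{\alpha_{t_0}(p)}(\phi_{t_0})$ to be invoked, and this uses both implications in Lemma~\ref{chainrule}.
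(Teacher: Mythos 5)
Your proposal is correct, but it follows a genuinely different route from the paper's. The paper also starts from Proposition~\ref{contatto}, but to place $\phi_t(p)$ in $\br_{\alpha_{t_0}(p)}(\phi_{t_0})$ it uses the backward-iteration machinery of Section~\ref{Sback}: isolation of $p$ yields (Theorem~\ref{backward}(2) or Proposition~\ref{back-elliptic}(3)) a backward iteration sequence $\{w_n\}$ for $\phi_{t_0}$ converging to $p$ inside a $K$-region with hyperbolic step $\leq \alpha_{t_0}(p)$; pushing it forward, $\{\phi_t(w_n)\}$ is again a backward iteration sequence for $\phi_{t_0}$ converging to $\phi_t(p)$, and Lemma~\ref{back-boundary} gives $\alpha_{t_0}(\phi_t(p))\leq\alpha_{t_0}(p)$. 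You obtain the same membership (indeed with equality of the dilation coefficient) purely from the commutation $\phi_{t_0}\circ\phi_t=\phi_t\circ\phi_{t_0}$ and the two directions of Lemma~\ref{chainrule}; after that, both proofs finish the same way, combining continuity of $t\mapsto\phi_t(p)$ with isolation and the semigroup bootstrap. Your route is more economical: it bypasses Section~\ref{Sback} entirely (in particular the rotational elliptic analysis of Proposition~\ref{back-elliptic}), invokes isolation only once, and makes explicit via Proposition~\ref{variesem}(2) why $\alpha_t(p)>1$ for all $t>0$, a point the paper leaves implicit; what the paper's route buys is the backward-orbit material itself, which is of independent interest and is developed anyway. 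Two small points to tighten: the fact that $\phi_{t_0}$ fixes $\phi_t(p)$ is not literally automatic---one should compute the $K$-limit of $\phi_{t+t_0}$ at $p$ along the radial curve of $\varphi_p$, using Theorem~\ref{JWC}(2) (the curves $s\mapsto\phi_{t_0}(\varphi_p(s))$ and $s\mapsto\phi_t(\varphi_p(s))$ are special and restricted, hence eventually lie in $K$-regions), which gives $\phi_{t+t_0}(p)=\phi_t(\phi_{t_0}(p))=\phi_t(p)$ on one hand and $\phi_{t+t_0}(p)=\phi_{t_0}(\phi_t(p))$ on the other; this is exactly the mechanism already at work in the proofs of Lemma~\ref{chainrule} and Proposition~\ref{contatto}, so it is a matter of spelling it out. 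Second, the statement of Proposition~\ref{contatto} does not assert that $T$ is maximal, so to ``contradict the maximality of $T$'' you should take $T$ to be the supremum of admissible times (as in its proof), or, more simply, observe that your $\delta$-bootstrap already yields $\phi_t(p)=p$ with $\alpha_t(p)<+\infty$ for every $t\geq 0$, which makes the discussion of $T<+\infty$ unnecessary.
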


\begin{proof}
By Theorem \ref{backward} or Proposition \ref{back-elliptic} there exists a backward iteration sequence $\{w_n\}$ for $\phi_{t_0}$ such that $k_D(w_n,w_{n+1})\leq \frac{1}{2}\log \al_{t_0}(p)$ and $\{w_n\}$ converges to $p$ inside a $K$-region. For $t\in [0,t_0)$, define $z_n^t:=\phi_t(w_n)$. Then
\[
\phi_{t_0}(z_n^t)=\phi_{t}(\phi_{t_0}(w_n))=\phi_t(w_{n-1})=z_{n-1}^t,
\]
and
\begin{equation}\label{cast}
k_D(z_n^t, z_{n+1}^t)=k_D(\phi_t(w_n), \phi_t(w_{n+1}))\leq k_D(w_n,w_{n+1})\leq \frac{1}{2}\log \al_{t_0}(p).
\end{equation}
Hence $\{z_n^t\}$ is a backward iteration sequence for $\phi_{t_0}$. Moreover, by Proposition \ref{contatto}, $p$ is a regular contact point for $\phi_t$ all $t\in [0, t_0]$. Since  $\{w_n\}$ converges to $p$ inside a $K$-region, by Theorem \ref{JWC}
\[
q_t:=\lim_{n\to \infty} z_n^t=\lim_{n\to\infty}\phi_t(w_n)=\phi_t(p)\in \de D.
\]
Therefore, by Lemma \ref{back-boundary}, $q_t$ is a BRFP  for $\phi_{t_0}$ with dilation coefficient $\al_{t_0}(q_t)\leq \al_{t_0}(p)$.

Hence, by Proposition \ref{contatto}, the curve $[0,t_0]\ni t\mapsto q_t$ is a continuous curve made of BRFP's of $\phi_{t_0}$ and $\al_{t_0}(q_t)\leq \al_{t_0}(p)$. Since $p$ is isolated, the only possibility is  $q_t=p$ for all $t\in [0,t_0]$.

Now, let $t>0$. Then $t=mt_0+s$ for some $m\in \N$ and $s\in [0, t_0)$. Hence
\[
\phi_{t}(p)=\phi_{mt_0+s}(p)=\phi_{mt_0}(\phi_s(p))=\phi_{t_0}^{\circ m}(p)=p.
\]
Moreover, by Lemma \ref{chainrule}, $\al_{t}(p)=\al_{t_0}(p)^m\cdot \al_s(p)<+\infty$, which implies that $p$ is a common BRFP for $(\phi_t)$.
\end{proof}

Now we can prove Theorem \ref{main}:

\begin{proof}[Proof of Theorem \ref{main}] By Theorem \ref{densem} and Lemma \ref{stationary}, if $\al_f(p)\leq 1$ then $p$ is the common Denjoy-Wolff point of $(\phi_t)$. If $\al_f(p)>1$ the result follows from Theorem \ref{comune}.
\end{proof}

For rotational elliptic semigroups there might exist boundary regular (in fact stationary) non isolated fixed points which are not fixed for all the elements of the semigroup:

\begin{example}
Let $\phi_t(z,w)=(e^{2\pi i t}z, w)$. Then $(\phi_t)$ is a rotational elliptic semigroup of $\B^2$. The points $\de \D \times \{0\}$ are BRFP's for $\phi_1$ (with boundary dilation coefficient $1$) but not for $\phi_t$ with $t\neq 0 \mod 1$.
\end{example}

\end{document}